\tikzset{stretch/.initial=1}
\newcommand\drawloop[4][]%
\newtheorem{theorem}{Theorem}[section]
\newtheorem{lemma}[theorem]{Lemma}
\newtheorem{proposition}[theorem]{Proposition}
\newtheorem{corollary}[theorem]{Corollary}
\newtheorem{example}[theorem]{Example}
\theoremstyle{definition}
\newtheorem{definition}[theorem]{Definition}
\theoremstyle{remark}
\newtheorem{remark}[theorem]{Remark}
\newtheorem{question}[theorem]{Question}
\renewcommand{\epsilon}{\varepsilon}
\title[On full trajectories, limit sets and internal chain transitivity]{When is the beginning the end? On full trajectories, limit sets and internal chain transitivity} 
\author[Joel Mitchell]{Joel Mitchell}
\begin{document}

\hypersetup{pageanchor=false} 
\maketitle



\begin{abstract}
Let $f \colon X \to X$ be a continuous map on a compact metric space $X$ and let $\alpha_f$, $\omega_f$ and $ICT_f$ denote the set of $\alpha$-limit sets, $\omega$-limit sets and nonempty closed internally chain transitive sets respectively. In this paper we characterise, by introducing novel variants of shadowing, maps for which every element of $ICT_f$ is equal to (resp.\ may be approximated by) the $\alpha$-limit set and the $\omega$-limit set of the same full trajectory. We construct examples highlighting the difference between these properties.
\end{abstract}



\hypersetup{pageanchor=true} 

\section{Introduction}
Let $(X,d)$ be a compact metric space and $f \colon X \to X$ a continuous function. We call the pair $(X,f)$ a \emph{dynamical system}. Given a point $x \in X$, its \emph{orbit} is the set $\{f^i(x) \mid i \in \mathbb{N}_0\}$. The orbit sequence $x, f(x), f^2(x) \ldots$ may be thought of as where $x$ travels under iteration of $f$. In a compact metric setting, this sequence has accumulation points: the set of all such points is the $\omega$-limit set of $x$ (denoted $\omega_f(x)$). This may be thought of as the \emph{target} of $x$; it is where it \emph{ends up}, so to speak. Analogously, one may wonder where a point came from. In the case where $f$ is a homeomorphism, we may simply consider the orbit of $x$ under $f^{(-1)}$ and $\omega_{f^{(-1)}}(x)$. In this case, we call $\omega_{f^{(-1)}}(x)$ the $\alpha$-limit set of $x$ under $f$ (denoted $\alpha_f(x)$): this may be thought of as the \emph{source} of $x$. In the case when $f$ is not one-to-one, a point may have multiple sources. This presents a difficulty when attempting to give a suitable definition for $\alpha_f(x)$. Various approaches to this have been taken \cite{BalibreaPiotr, Coven,CuiDing,Hero, Sun2, Sun}. For a discussion on this, we refer the reader to \cite{GoodMeddaughMitchell}. In the present paper, we refrain from defining such sets for individual points, choosing instead to define them for \emph{backward} and \emph{full trajectories}. This is the approach taken in \cite{BalibreaPiotr}, \cite{GoodMeddaughMitchell} and \cite{Hirsch}. An infinite sequence $\langle x_i \rangle _{i \leq 0}$ is called a \emph{backward trajectory} of $x$ if $f(x_i)=x_{i+1}$ for all $i \leq -1$ and $x_0=x$. The $\alpha$-limit set of $\langle x_i \rangle _{i \leq 0}$ is the set of accumulation points of this sequence. We denote the set of all such $\alpha$-limit sets by $\alpha_f$. A \emph{full trajectory} is a two-sided sequence $\langle x_i \rangle _{i \in \mathbb{Z}}$ such that $f(x_i)=x_{i+1}$ for all $i \in \mathbb{Z}$. We define the limit sets of such a sequence in the natural way: $\alpha_f(\langle x_i \rangle _{i \in \mathbb{Z}})= \alpha_f(\langle x_i \rangle _{i \leq 0})$ and $\omega_f(\langle x_i \rangle _{i \in \mathbb{Z}})=\omega_f( x_0 )$. Although $\alpha$-limit sets have not been studied quite as extensively as their $\omega$ counterparts, interest in them has been growing (see, for example, \cite{BalibreaPiotr, Coven,CuiDing, GoodMeddaughMitchell, Hero, Hirsch}).

In this paper, we are concerned with the following two questions:
\begin{question}\label{Q1} When is it the case that every nonempty closed internally chain transitive set is both the $\alpha$-limit set and $\omega$-limit set of the same full trajectory? I.e.\ when is it true that for any $A \in ICT_f$ there exists a full trajectory $\langle x_i \rangle_{i \in \mathbb{Z}}$ such that $\alpha(\langle x_i \rangle_{i \in \mathbb{Z}})=\omega(\langle x_i \rangle_{i \in \mathbb{Z}})=A$?
\end{question}
\begin{question}\label{Q2} When is it the case that every nonempty closed internally chain transitive set may be approximated, to any given accuracy, by both the $\alpha$-limit set and $\omega$-limit set of the same full trajectory? I.e.\ when is it true that for any $A \in ICT_f$ and any $\epsilon>0$ there exists a full trajectory $\langle x_i \rangle_{i \in \mathbb{Z}}$ such that $d_H(\alpha(\langle x_i \rangle_{i \in \mathbb{Z}}), A) < \epsilon$ and $d_H( \omega(\langle x_i \rangle_{i \in \mathbb{Z}}),A)< \epsilon$?
\end{question}
In answering these questions, this paper aims to provide the final chapter in the journey to characterise when limit sets approximate, or are precisely, the elements of $ICT_f$ in terms of shadowing properties. This journey was embarked upon by Good and Meddaugh \cite{GoodMeddaugh2016} who were concerned with $\omega$-limit sets, before this author joined their path in \cite{GoodMeddaughMitchell} where we dealt mainly with $\alpha$-limit sets. Whilst this journey is, therefore, a recent one, its motivations are far older. Indeed, multiple authors have either studied, or attempted to characterise, the set of all $\omega$-limit sets in a variety of settings. For example, $\omega$-limit sets of continuous maps of the closed unit interval $I$ have been completely characterised in \cite{AgronskyBrucknerCederPearson,BrucknerSmital}: the authors show that a nonempty subset $E$ of $I$ is an $\omega$-limit set of some continuous map $f$ if and only if $E$ is either a closed, nowhere dense set, or a union of finitely many non-degenerate closed intervals. Furthermore, it has been shown that $\omega_f$ is closed (with respect to the Hausdorff topology) for maps of the circle \cite{Pokluda}, the interval \cite{BlokhBrucknerHumkeSmital} and other finite graphs \cite{MaiShao}. Perhaps one of the most important results motivating our work is one of Hirsch \emph{et al.\ }\cite{Hirsch}: every $\alpha$- and $\omega$- limit set is \emph{internally chain transitive} (precise definitions below). Together with the fact that these limit sets are closed, this means $\alpha_f, \omega_f \subseteq ICT_f$.

A second important result motivating this journey is one of Meddaugh and Raines \cite{MeddaughRaines} who establish that, for maps with \emph{shadowing}, or \emph{pseudo-orbit tracing}, $\overline{\omega_f}=ICT_f$. The shadowing property, defined below, has both numerical and theoretical importance in topological dynamics. It has been studied in a variety of settings, including, for example, in the context of Axiom A diffeomorphisms \cite{bowen-markov-partitions}, in numerical analysis \cite{Corless,CorlessPilyugin,Pearson}, as an important factor in stability theory \cite{Pilyugin, robinson-stability,Walters} and as a property in and of itself \cite{Coven1, GoodMeddaugh2018, GoodMitchellThomas, LeeSakai, Mitchell, Nusse, Pennings, Pilyugin,Sakai2003}. Various variants on the pseudo-orbit tracing property have also been studied including, for example, ergodic, thick, and Ramsey shadowing \cite{brian-oprocha, bmr, Dastjerdi, Fakhari, Oprocha2016}, limit, or asymptotic, shadowing \cite{BarwellGoodOprocha, GoodOprochaPuljiz2019, Pilyugin2007}, $s$-limit shadowing \cite{BarwellGoodOprocha,GoodOprochaPuljiz2019, LeeSakai}, orbital shadowing \cite{GoodMeddaugh2016,Mitchell, PiluginRodSakai2002, Pilyugin2007}, and inverse shadowing \cite{CorlessPilyugin, GoodMitchellThomas2, Lee}. In the first stage of this journey, Good and Meddaugh \cite{GoodMeddaugh2016} introduced new variants of shadowing which precisely characterise maps for which $\overline{\omega_f}=ICT_f$ and $\omega_f=ICT_f$. In \cite{GoodMeddaughMitchell}, the author, in collaboration with Good and Meddaugh, then characterised maps for which $\overline{\alpha_f}=ICT_f$ and $\alpha_f=ICT_f$. Along the way, we demonstrated that shadowing is itself a sufficient condition for the property under consideration in Question \ref{Q2}, whilst the addition of expansivity is sufficient for the property under consideration in Question \ref{Q1}. Due to their lengthy statements, we will refer to the properties in questions \ref{Q1} and \ref{Q2} as $P_e$ and $P_a$ respectively (`$e$' for `equal', `$a$' for `approximate'). Thus:
\begin{itemize}
    \item Property $P_e$: 
`For any $A \in ICT_f$ there exists a full trajectory $\langle x_i \rangle_{i \in \mathbb{Z}}$ such that $\alpha(\langle x_i \rangle_{i \in \mathbb{Z}})=\omega(\langle x_i \rangle_{i \in \mathbb{Z}})=A$.'

\item Property $P_a$: `For any $A \in ICT_f$ and any $\epsilon>0$ there exists a full trajectory $\langle x_i \rangle_{i \in \mathbb{Z}}$ such that $d_H(\alpha(\langle x_i \rangle_{i \in \mathbb{Z}}), A) < \epsilon$ and $d_H( \omega(\langle x_i \rangle_{i \in \mathbb{Z}}),A)< \epsilon$.'
\end{itemize}

\medskip

The layout of this paper is as follows. In Section \ref{Section_Full_Preliminaries}, we provide the definitions and motivating results which underpin this paper. In Section \ref{Section_characterise_P_e_and_P_a}, we answer questions \ref{Q1} and \ref{Q2}. Throughout, we present examples which serve both to motivate our results and also to demonstrate the distinction between some of the properties introduced in \cite{GoodMeddaugh2016}, \cite{GoodMeddaughMitchell} and in this paper. In particular, we construct an example of a system (Example \ref{Example_SquareSystem}) which demonstrates that one can have $\alpha_f=\omega_f= ICT_f$ whilst exhibiting neither property $P_e$ nor property $P_a$. Along the way, in subsection \ref{SectionGamma_Implications}, we offer a couple of implications concerning $\gamma$-limit sets (defined in said section). We close, in Section \ref{Section_closing_examples}, with two final examples, one of which (Example \ref{Example_periodic_cofinal}) is a system satisfying $P_a$ but not $P_e$.
\section{Preliminaries}\label{Section_Full_Preliminaries}

A \emph{dynamical system} is a pair $(X,f)$ consisting of a compact metric space $X$ and a continuous function $f\colon X \to X$. We say the \emph{positive orbit of $x$ under $f$} is the set of points $\{x, f(x), f^2(x), \ldots\}$. A \emph{backward trajectory} of the point $x$ is a sequence $\langle x_{i}\rangle_{i\leq0}$ for which $f(x_{i})=x_{i+1}$ for all $i\leq -1$ and $x_0=x$. We say a bi-infinite sequence $\langle x_i \rangle _{i \in \mathbb{Z}}$ is a \emph{full trajectory} (of each $x_i$) if $f(x_i)=x_{i+1}$ for each $i \in \mathbb{Z}$. Because a point may have multiple preimages, this means that a full trajectory of a point need not be unique. 

For a sequence $\langle x_i \rangle _{i>N}$ in $X$, where $N \geq -\infty$, we define its $\omega$-limit set, denoted $\omega(\langle x_i\rangle_{i > N} )$, or simply $\omega(\langle x_i\rangle)$: 
\[\omega(\langle x_i\rangle) \coloneqq \bigcap_{M \in \mathbb{N}}\overline{\{x_n \mid n >M\}}.\]
For $x \in X$, we define the $\omega$\emph{-limit set of }$x$: $\omega(x) \coloneqq \omega(\langle f^n(x)\rangle_{n = 0} ^\infty)$. 
In similar fashion, for a sequence $\langle x_i \rangle _{i< N}$ in $X$, where $N \leq \infty$, we define its \emph{$\alpha$-limit set}, denoted $\alpha(\langle x_i\rangle_{i < N} )$, or simply $\alpha(\langle x_i\rangle)$: 
\[\alpha(\langle x_i \rangle ) \coloneqq \bigcap_{M \in \mathbb{N}}\overline{\{x_n \mid n <-M\}}.\]
In the case when $f$ is a homeomorphism, we also define the $\alpha$-limit set of a point: $\alpha(x) \coloneqq  \alpha(\langle f^i(x)\rangle_{i \leq 0})$.
We denote by $\omega_f$ and $\alpha_f$ the set of all $\omega$-limit sets of points in $(X,f)$ and the set of all $\alpha$-limit sets of full trajectories in $(X,f)$ respectively. The compactness of $X$ guarantees that elements of $\alpha_f$ and $\omega_f$ are nonempty and closed.

A finite or infinite sequence $\langle x_i\rangle_{i=0} ^N$ is said to be an \emph{$\delta$-chain} if $d(f(x_i), x_{i+1})< \delta$ for all indices $i<N$. If $N=\infty$ then we say the sequence is a \emph{$\delta$-pseudo-orbit.} A set $A$ is \emph{internally chain transitive} if for any pair of points $a,b \in A$ and any $\delta>0$ there exists a finite $\delta$-chain $\langle x_i \rangle _{i=0} ^N$ in $A$ with $x_0=a$, $x_N=b$ and $N\geq 1$. We denote by $ICT_f$ the set of all nonempty closed internally chain transitive sets.

We denote by $2^X$ the hyperspace of nonempty compact subsets of $X$. This forms a compact metric space with the \emph{Hausdorff metric} induced by the metric $d$. For $A,B \in 2^X$ the \emph{Hausdorff distance} between $A$ and $B$ is given by
\[d_H (A,A^\prime)= \inf \{\epsilon>0 \mid A \subseteq B_\epsilon (A^\prime) \text{ and } A^\prime \subseteq B_\epsilon (A)\}. \]
As collections of nonempty compact sets, $\alpha_f$, $\omega_f$ and $ICT_f$ are all subsets of $2^X$.
Meddaugh and Raines \cite{MeddaughRaines} establish the following result.

\begin{lemma}\textup{\cite{MeddaughRaines}}\label{lemmaICTclosed}
Let $(X,f)$ be a dynamical system. Then $ICT_f$ is closed in $2^X$.
\end{lemma}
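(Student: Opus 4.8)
The plan is to take a sequence $\langle A_n \rangle_{n \in \mathbb{N}}$ in $ICT_f$ with $A_n \to A$ in the Hausdorff metric and show $A \in ICT_f$. Since $A \in 2^X$ it is already nonempty and compact, hence closed, so it remains only to verify that $A$ is internally chain transitive. The governing idea is that a $\delta$-chain is a robust object: it tolerates both small perturbations of its entries and the transfer of its ambient set to a Hausdorff-close set, provided we have pre-shrunk $\delta$ using uniform continuity of $f$.

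Concretely, I would fix $a,b \in A$ and $\delta>0$; by compactness $f$ is uniformly continuous, so pick $\delta' \in (0,\delta/4)$ with $d(u,v)<\delta' \implies d(f(u),f(v))<\delta/4$. Using $A_n \to A$, fix $n$ with $d_H(A_n,A)<\delta'$, and pick $a',b' \in A_n$ with $d(a,a')<\delta'$ and $d(b,b')<\delta'$ (possible since $A \subseteq B_{\delta'}(A_n)$). As $A_n \in ICT_f$, there is a finite $(\delta/4)$-chain $\langle y_i\rangle_{i=0}^N$ in $A_n$ with $y_0=a'$, $y_N=b'$ and $N\geq 1$. Transfer it to $A$ by setting $x_0=a$, $x_N=b$, and choosing $x_i \in A$ with $d(x_i,y_i)<\delta'$ for $0<i<N$ (possible since $A_n \subseteq B_{\delta'}(A)$); note $d(x_i,y_i)<\delta'$ then holds for all $i$, including the endpoints. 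Then for each $i<N$,
\[ d(f(x_i),x_{i+1}) \leq d(f(x_i),f(y_i)) + d(f(y_i),y_{i+1}) + d(y_{i+1},x_{i+1}) < \tfrac{\delta}{4}+\tfrac{\delta}{4}+\delta' < \delta, \]
so $\langle x_i\rangle_{i=0}^N$ is a $\delta$-chain in $A$ from $a$ to $b$ with $N \geq 1$; hence $A$ is internally chain transitive and $A \in ICT_f$.

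The argument is elementary once the quantifiers are ordered correctly, and the only real care needed is bookkeeping of constants: $\delta'$ must be chosen (and $n$ and the chain in $A_n$ selected) before anything else, and one should notice that $d_H(A_n,A)<\delta'$ supplies both facts used — that $a,b$ are $\delta'$-approximated by points of $A_n$, and that each $y_i$ is $\delta'$-approximated by a point of $A$. I expect the main (minor) obstacle to be exactly this: keeping the three terms of the triangle inequality each controlled by $\delta/4$ simultaneously, which is why $f$ must be made uniformly continuous on the appropriate scale at the outset.
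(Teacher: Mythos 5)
Your proof is correct, and it is essentially the standard argument given in the cited source (Meddaugh--Raines), to which the paper defers rather than proving the lemma itself: pass to a Hausdorff-close $A_n$, borrow a chain there for a pre-shrunk tolerance obtained from uniform continuity of $f$, and transfer it back to $A$ via the triangle inequality. The constant bookkeeping ($\delta/4$ at each of the three stages) is handled correctly, including the endpoint cases and the requirement $N \geq 1$.
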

Hirsch \emph{et al.}\ \cite{Hirsch} show that the $\alpha$-limit set (resp.\ $\omega$-limit set) of any pre-compact backward (resp.\ forward) trajectory is internally chain transitive. Since our setting is a compact metric space all $\alpha$-limit sets and $\omega$-limit sets are internally chain transitive. We formulate this as Lemma \ref{LemmaAlphaOmegaAreICT} below.
\begin{lemma}\textup{\cite{Hirsch}} \label{LemmaAlphaOmegaAreICT} Let $(X,f)$ be a dynamical system. Then $\alpha_f, \omega_f \subseteq ICT_f$.
\end{lemma}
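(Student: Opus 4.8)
Since compactness of $X$ already guarantees that every element of $\alpha_f$ and $\omega_f$ is nonempty and closed, it suffices to fix one such limit set $A$ and verify internal chain transitivity directly. The plan is to treat the two cases in parallel: in the $\omega$-case let $x\in X$ and $A=\omega(x)=\omega(\langle f^n(x)\rangle_{n\ge 0})$, and in the $\alpha$-case let $\langle x_i\rangle_{i\in\mathbb{Z}}$ (equivalently, a backward trajectory $\langle x_i\rangle_{i\le 0}$) be given and set $A=\alpha(\langle x_i\rangle_{i\le 0})$. In both cases $A$ arises as the set of accumulation points of an \emph{exact} trajectory segment $\langle z_k\rangle$ (with $z_k=f^k(x)$, $k\to+\infty$, or $z_k=x_k$, $k\to-\infty$), so $f(z_k)=z_{k+1}$ holds on the nose. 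The first preliminary observation I would record is the standard compactness fact that the trajectory approaches its limit set: $d(z_k,A)\to 0$ along the relevant direction of $k$. This is proved by contradiction — a subsequence staying at distance $\ge\epsilon$ from $A$ would, by compactness of $X$, have an accumulation point which by definition lies in $A$, a contradiction.

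Next, fix $a,b\in A$ and $\delta>0$. Using uniform continuity of $f$ on the compact space $X$, choose $\delta_1\in(0,\delta/3)$ such that $d(u,v)<\delta_1$ implies $d(f(u),f(v))<\delta/3$, and choose an index threshold $N$ beyond which $d(z_k,A)<\delta_1$. Since $a$ and $b$ are accumulation points of $\langle z_k\rangle$, I can select indices $m<n$ (both past the threshold $N$, with $n$ chosen first near $b$ and then $m<n$ chosen near $a$) satisfying $d(z_m,a)<\delta_1$ and $d(z_n,b)<\delta/3$. The finite block $z_m,z_{m+1},\dots,z_n$ is a genuine orbit segment. Now define a candidate chain in $A$ by setting $y_m:=a$, $y_n:=b$, and for each $m<k<n$ picking $y_k\in A$ with $d(z_k,y_k)<\delta_1$ (possible since all such $k$ lie past the threshold).

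To finish, I would check that $\langle y_k\rangle_{k=m}^{n}$ is a $\delta$-chain in $A$ with the correct endpoints and length $n-m\ge 1$. For a generic interior step, $d(y_k,z_k)<\delta_1$ gives $d(f(y_k),z_{k+1})=d(f(y_k),f(z_k))<\delta/3$, and then $d(z_{k+1},y_{k+1})<\delta_1<\delta/3$, so $d(f(y_k),y_{k+1})<2\delta/3<\delta$; the two boundary steps involving $a$ and $b$ are handled the same way, using $d(z_m,a)<\delta_1$ and $d(z_n,b)<\delta/3$ respectively (this is why $a$ is chosen within $\delta_1$ but $b$ only within $\delta/3$). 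Re-indexing $\langle y_k\rangle$ to start at $0$ yields the required finite $\delta$-chain, so $A\in ICT_f$.

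The only point requiring a little care, and the place I expect the referee to look, is the $\alpha$-case: because $f$ need not be injective there is no backward map to iterate, but the argument never runs $f$ backward — it only perturbs points of the \emph{given} backward trajectory onto $A$ and applies $f$ forward — so non-invertibility causes no difficulty and the two cases are genuinely symmetric. The main (mild) obstacle is thus purely the bookkeeping of the $\delta/3$ estimates together with the elementary fact that the trajectory converges to its limit set; everything else is triangle inequality.
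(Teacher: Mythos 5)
Your proof is correct, but note that the paper itself does not prove this lemma at all: it is imported verbatim from Hirsch, Smith and Zhao (the cited \cite{Hirsch}), with the only local remark being that compactness of $X$ makes every forward orbit and every backward trajectory pre-compact, so the hypothesis of the cited result is automatic. What you have written is essentially a self-contained reconstruction of the standard argument behind that citation: trajectory converges to its limit set by compactness, project a long genuine orbit segment onto $A$, and control the errors with uniform continuity and the triangle inequality. All the estimates check out ($\delta/3+\delta/3<\delta$ at every step, including the two boundary steps), the chain lies in $A$, has length $n-m\ge 1$, and your observation that the $\alpha$-case never requires inverting $f$ is exactly the right point to flag. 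The one small slip is the parenthetical order in which you select the indices: in the $\omega$-case, if you fix $n$ first there are only finitely many indices in $(N,n)$, so nothing guarantees one of them is $\delta_1$-close to $a$; you should pick $m>N$ near $a$ first and then $n>m$ near $b$ (both possible since $a$ and $b$ are cluster points, hence realised along infinitely many indices). In the $\alpha$-case, where indices run to $-\infty$, your stated order does work. This is a one-line repair and does not affect the validity of the argument.
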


A point $x$ is said to \emph{$\epsilon$-shadow} a sequence $\langle x_i\rangle_{i=0} ^\infty$ if $d(f^i(x), x_i)< \epsilon$ for all $i \in \mathbb{N}_0$. We say the system $(X,f)$ has the \emph{shadowing property}, or simply shadowing, if for every $\epsilon>0$ there exists $\delta>0$ such that every $\delta$-pseudo-orbit is $\epsilon$-shadowed.

\begin{definition}
Suppose that $(X,f)$ is a dynamical system. A sequence $\langle x_{i}\rangle_{i=0} ^\infty$ in $X$ is called an \emph{asymptotic pseudo-orbit} if $d(f(x_i), x_{i+1}) \rightarrow 0$ as $i \rightarrow \infty$. In similar fashion, a sequence $\langle x_{i}\rangle_{i\leq0}$ is
\begin{enumerate}
\item a \emph{backward $\delta$-pseudo-orbit} if $d(f(x_{i}),x_{i+1})<\delta$ for each $i\leq-1$;
\item a \emph{backward asymptotic pseudo-orbit} if $d(f(x_{i}),x_{i+1})\rightarrow 0$ as $i \rightarrow -\infty$.
\end{enumerate}
A sequence $\langle x_{i}\rangle_{i\in\mathbb Z}$ in $X$ is
\begin{enumerate}
    \item a \emph{two-sided $\delta$-pseudo-orbit} if $d(f(x_{i}),x_{i+1})<\delta$ for each $i\in\mathbb Z$;
    \item a \emph{two-sided asymptotic pseudo-orbit} if $d(f(x_i), x_{i+1}) \rightarrow 0$ as $i \rightarrow \pm \infty$.
\end{enumerate}
\end{definition}
With the above terminology, the system $(X,f)$ has \emph{backward shadowing} if for any $\epsilon>0$ there exists $\delta>0$ such that for any backward $\delta$-pseudo-orbit $\langle x_i \rangle _{i \leq 0}$ there exists a backward trajectory $\langle z_i \rangle _{i \leq 0}$ such that $d(x_i, z_i) < \epsilon$ for all $i \leq 0$. Similarly it has \emph{two-sided shadowing} if for any $\epsilon>0$ there exists $\delta>0$ such that for any two-sided $\delta$-pseudo-orbit $\langle x_i \rangle _{i \in \mathbb{Z}}$ there exists a full trajectory $\langle z_i \rangle _{i \in \mathbb{Z}}$ such that $d(x_i, z_i) < \epsilon$ for all $i \in \mathbb{Z}$. In \cite{GoodMaciasMeddaughMitchellThomas}, we show that shadowing implies backward and two-sided shadowing, whilst all three properties are equivalent for surjective maps. Using this, in \cite{GoodMeddaughMitchell}, we showed that shadowing is sufficient for $P_a$. Furthermore, we showed that the addition of \emph{expansivity} is sufficient for $P_e$; a map is \emph{expansive} if there exists $c>0$ such that for any distinct $x,y \in X$ there exists $k \in \mathbb{N}_0$ such that $d(f^k(x), f^k(y)) \geq c$. These results are formulated below.

\begin{theorem}\cite[Theorem 4.2]{GoodMeddaughMitchell}\label{thm_shad_implies_P_a} If $(X,f)$ has shadowing then it satisfies property $P_a$.
\end{theorem}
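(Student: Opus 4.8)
The plan is to reduce the statement to two-sided shadowing — which, as recalled above, is implied by shadowing — applied to a two-sided pseudo-orbit that has been engineered so that both of its limit sets equal $A$. Fix $A \in ICT_f$ and $\epsilon>0$. First I would apply two-sided shadowing to obtain $\delta>0$ such that every two-sided $\delta$-pseudo-orbit is $(\epsilon/2)$-shadowed by some full trajectory. The heart of the argument is then to build a two-sided $\delta$-pseudo-orbit $\langle y_i\rangle_{i\in\mathbb Z}$ with $y_i\in A$ for all $i$ and $\alpha(\langle y_i\rangle_{i\in\mathbb Z})=\omega(\langle y_i\rangle_{i\in\mathbb Z})=A$.

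To construct $\langle y_i\rangle_{i\in\mathbb Z}$, fix a countable dense subset $\{a_1,a_2,\ldots\}$ of $A$ (possible since $A$ is compact metric). For the forward half, use internal chain transitivity to pick, for each $n\geq 1$, a finite $\delta$-chain in $A$ from $a_n$ to $a_{n+1}$, and concatenate these chains over $n=1,2,\ldots$; this yields a forward $\delta$-pseudo-orbit in $A$ along which each $a_n$ occurs at an index tending to infinity, so that — using density of $\{a_n\}$ in $A$, and the fact that $A$ is closed and contains every term — its $\omega$-limit set is exactly $A$. For the backward half, build an infinite backward $\delta$-pseudo-orbit in $A$ by the symmetric procedure: repeatedly use that for any $w\in A$ a $\delta$-chain in $A$ ending at $w$ has penultimate point in $A$ mapping within $\delta$ of $w$, and organise the construction (again via the $a_n$) so that the $\alpha$-limit set equals $A$. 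Finally fix a basepoint $y_0\in A$ and splice the backward half, $y_0$, and the forward half together using two further finite $\delta$-chains in $A$; after re-indexing this produces the desired $\langle y_i\rangle_{i\in\mathbb Z}$.

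Applying two-sided shadowing to $\langle y_i\rangle_{i\in\mathbb Z}$ gives a full trajectory $\langle z_i\rangle_{i\in\mathbb Z}$ with $d(z_i,y_i)<\epsilon/2$ for every $i$, and I would finish with a short Hausdorff estimate. If $z$ is an accumulation point of $\langle z_i\rangle$ as $i\to-\infty$, a subsequence of the corresponding $y_i$ converges to a point of $\alpha(\langle y_i\rangle)=A$ within $\epsilon/2$ of $z$, so $\alpha(\langle z_i\rangle)\subseteq B_\epsilon(A)$; conversely each $a\in A=\alpha(\langle y_i\rangle)$ is a limit of terms $y_{i_k}$ with $i_k\to-\infty$, and a subsequence of $z_{i_k}$ converges to a point of $\alpha(\langle z_i\rangle)$ within $\epsilon/2$ of $a$, so $A\subseteq B_\epsilon(\alpha(\langle z_i\rangle))$. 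Hence $d_H(\alpha(\langle z_i\rangle),A)<\epsilon$, and the identical argument with $i\to+\infty$ gives $d_H(\omega(\langle z_i\rangle),A)<\epsilon$, which is exactly property $P_a$.

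The step I expect to be the main obstacle is the middle one: exhibiting a single two-sided $\delta$-pseudo-orbit lying in $A$ whose $\alpha$- and $\omega$-limit sets are simultaneously all of $A$. In particular one must verify that the backward half really can be realised as an infinite backward $\delta$-pseudo-orbit inside $A$ even when $f$ is not surjective — this is precisely where internal chain transitivity of $A$ is used in an essential way — and one must track the bookkeeping so that every consecutive-pair error, including at the splice points, stays below $\delta$. Once this pseudo-orbit is available, the remainder is a routine invocation of two-sided shadowing together with the elementary estimate above.
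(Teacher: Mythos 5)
Your route is the same as the paper's: the cited proof of this theorem rests on precisely the two ingredients you identify, namely the construction of a two-sided pseudo-orbit inside $A$ whose $\alpha$- and $\omega$-limit sets both equal $A$ (isolated in the present paper as Lemma \ref{Lemma_KEYLEMMA_ICT}) together with the fact that shadowing implies two-sided shadowing, and your concluding Hausdorff estimate is the standard one. The one step that does not work as literally written is the forward half: if you make a single pass through an enumeration $a_1,a_2,\ldots$ of a countable dense subset and concatenate $\delta$-chains from $a_n$ to $a_{n+1}$, then each $a_n$ may occur only finitely often in the resulting pseudo-orbit, so an isolated point of $A$ need not belong to its $\omega$-limit set (the closure of the $k$-th tail of the enumeration can be a proper subset of $A$ when $A$ is not perfect, and $ICT_f$ certainly contains sets with isolated points, e.g.\ the set $Q$ of Example \ref{Example_SquareSystem}). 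The standard repair is to use an enumeration in which every $a_n$ recurs infinitely often, such as $a_1,a_1,a_2,a_1,a_2,a_3,\ldots$, and to do the same for the backward half; with that adjustment, and your correct observation that internal chain transitivity (rather than surjectivity) is what allows the backward pseudo-orbit to be extended indefinitely within $A$, the argument is complete.
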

\begin{theorem}\cite[Theorem 4.10]{GoodMeddaughMitchell}\label{thm_shad_plus_expansive_implies_P_e}
If $(X,f)$ is an expansive system with shadowing then it satisfies property $P_e$.
\end{theorem}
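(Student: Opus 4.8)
The strategy is: given $A\in ICT_f$, first manufacture a two-sided \emph{asymptotic} pseudo-orbit lying in $A$ whose $\alpha$- and $\omega$-limit sets both equal $A$, and then use shadowing to replace it by a genuine full trajectory, with expansivity ensuring that the replacement can be chosen asymptotic to the pseudo-orbit at \emph{both} ends. Since forward (resp.\ backward) asymptotic sequences share their $\omega$- (resp.\ $\alpha$-) limit sets, such a full trajectory witnesses $P_e$ for $A$.

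\emph{Step 1: from $ICT_f$ to a two-sided asymptotic pseudo-orbit.} Fix $A\in ICT_f$, a basepoint $p\in A$, and for each $n\in\mathbb N$ a finite $\tfrac1n$-dense subset $F_n\subseteq A$. Internal chain transitivity lets us, for any $\delta>0$, join any two points of $A$ by a finite $\delta$-chain in $A$, and by concatenating short chains through the successive points of $F_n$ we may insist that such a chain passes through every point of $F_n$. Concatenating such chains in $A$ with accuracy parameters decreasing to $0$ — running the construction forward from $p$ to build $\langle y_i\rangle_{i\ge 0}$, and (placing forward chains of $A$ into consecutive blocks of negative indices) backward from $p$ to build $\langle y_i\rangle_{i\le 0}$, so that the chain visiting $F_n$ occurs cofinally as $i\to+\infty$ and as $i\to-\infty$ — yields a two-sided sequence $\langle y_i\rangle_{i\in\mathbb Z}$ with $y_i\in A$ for all $i$, with $d(f(y_i),y_{i+1})\to 0$ as $i\to\pm\infty$ (a two-sided asymptotic pseudo-orbit, which we may also take to be a genuine $\delta_0$-pseudo-orbit for any prescribed $\delta_0$), and with every tail $\{y_i : i\ge M\}$ and $\{y_i : i\le -M\}$ being $\tfrac1n$-dense in $A$ for every $n$. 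Since all $y_i$ lie in the closed set $A$, it follows that $\alpha(\langle y_i\rangle_{i\in\mathbb Z})=\omega(\langle y_i\rangle_{i\in\mathbb Z})=A$. This is the passage from $ICT_f$ to pseudo-orbits underlying Theorem \ref{thm_shad_implies_P_a} and the results of \cite{GoodMeddaugh2016,GoodMeddaughMitchell}, now carried out in both time directions around a common basepoint.

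\emph{Step 2: correcting to a full trajectory, and conclusion.} The technical heart is that an expansive system with shadowing enjoys an asymptotic form of two-sided shadowing: every two-sided asymptotic pseudo-orbit $\langle y_i\rangle_{i\in\mathbb Z}$ is shadowed by a full trajectory $\langle z_i\rangle_{i\in\mathbb Z}$ with $d(z_i,y_i)\to 0$ as $i\to\pm\infty$. Granting this and applying it to the sequence of Step 1 gives a full trajectory $\langle z_i\rangle_{i\in\mathbb Z}$ asymptotic to $\langle y_i\rangle_{i\in\mathbb Z}$ at both ends. As forward-asymptotic sequences have equal $\omega$-limit sets and backward-asymptotic sequences have equal $\alpha$-limit sets,
\[\omega(\langle z_i\rangle_{i\in\mathbb Z})=\omega(\langle y_i\rangle_{i\in\mathbb Z})=A \quad\text{and}\quad \alpha(\langle z_i\rangle_{i\in\mathbb Z})=\alpha(\langle y_i\rangle_{i\in\mathbb Z})=A,\]
which is precisely property $P_e$ for $A$. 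As $A\in ICT_f$ was arbitrary, $(X,f)$ satisfies $P_e$.

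\emph{Step 3: the asymptotic shadowing claim — the main obstacle.} Ordinary shadowing already yields two-sided shadowing \cite{GoodMaciasMeddaughMitchellThomas}, hence for each fixed $\varepsilon>0$ a full trajectory $\varepsilon$-shadowing $\langle y_i\rangle$; the real work is to extract from expansivity that the shadowing error decays at the ends. Fix an expansive constant $c$, choose $\varepsilon_n\downarrow 0$ with $\varepsilon_n<c/3$, shadowing constants $\delta_n\downarrow 0$, and cut-offs $N_n\uparrow\infty$ with $d(f(y_i),y_{i+1})<\delta_n$ for $|i|\ge N_n$. One builds, inductively, full trajectories $\langle z^{(n)}_i\rangle$ that globally shadow $\langle y_i\rangle$ while shadowing it to within $\varepsilon_n$ outside the window $[-N_n,N_n]$, the $n$-th obtained by shadowing an auxiliary sequence that follows a genuine orbit segment of $\langle z^{(n-1)}_i\rangle$ inside the window and follows $\langle y_i\rangle$ outside it, the mismatch at the two splice points being kept below the relevant shadowing threshold so that the auxiliary sequence is an honest pseudo-orbit. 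Expansivity — under which a full trajectory is uniquely determined by staying $c$-close to another full trajectory for all time, and more to the point under which such partial solutions are pinned tightly enough to be merged — is then used to pass to a limiting full trajectory $\langle z_i\rangle$ that still $\varepsilon_n$-shadows $\langle y_i\rangle$ outside $[-N_n,N_n]$ for every $n$, so that $d(z_i,y_i)\to 0$ as $i\to\pm\infty$. Organising the splicing so that the auxiliary sequences genuinely are $\delta$-pseudo-orbits at the intended scales, handling the non-invertibility of $f$ by working throughout with full trajectories, and making the limiting step via expansivity rigorous — in effect proving a two-sided $s$-limit shadowing statement in the spirit of \cite{LeeSakai,BarwellGoodOprocha,GoodOprochaPuljiz2019,Pilyugin2007} and gluing its forward and backward halves at the basepoint — is where essentially all of the difficulty lies; the $ICT_f$-to-pseudo-orbit construction of Step 1 is routine by comparison.
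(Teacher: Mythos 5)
This statement is quoted from \cite[Theorem 4.10]{GoodMeddaughMitchell} and the present paper gives no proof of it, so there is no in-paper argument to compare against; your overall skeleton (Lemma \ref{Lemma_KEYLEMMA_ICT} to produce a two-sided asymptotic pseudo-orbit in $A$ with both limit sets equal to $A$, followed by an expansivity-enhanced shadowing step) is certainly the intended one. But as written your argument has a genuine gap, in two respects. First, the lemma you announce in Step 2 --- that in an expansive system with shadowing \emph{every} two-sided asymptotic pseudo-orbit is shadowed by a full trajectory with errors tending to $0$ at both ends --- is false. Take $X$ to be two points $a,b$ with $f$ the identity: this system is expansive and has shadowing, yet the sequence $y_i=a$ for $i\le 0$, $y_i=b$ for $i>0$ is a two-sided asymptotic pseudo-orbit admitting no full trajectory asymptotic to it at both ends. (The paper makes the same point, minus expansivity, with $x\mapsto x^2$ on $[0,1]$.) The claim must be restricted to asymptotic pseudo-orbits that are in addition $\delta$-pseudo-orbits for $\delta$ below the shadowing modulus of $c/3$; your Step 1 sequence does satisfy this, so the application is salvageable, but the lemma you actually invoke is not the one you state.

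Second, and more seriously, Step 3 does not prove even the corrected lemma: it describes an inductive splicing scheme and then concedes that organising the splices, handling non-invertibility, and making the limiting step rigorous ``is where essentially all of the difficulty lies.'' That concession is accurate, which is exactly the problem --- the content of the theorem is located in the part you have left as a plan. Note that the forward half is genuinely easy with the paper's definition of expansivity (separation at some $k\in\mathbb{N}_0$): if $z$ is a full trajectory $\epsilon_1$-shadowing $\langle y_i\rangle$ with $\epsilon_1<c/3$ and $w_n$ $\epsilon_n$-shadows the tail $\langle y_i\rangle_{i\ge N_n}$, then $d(f^i(f^{N_n}(z_0)),f^i(w_n))<\epsilon_1+\epsilon_n<c$ for all $i\ge 0$, whence $f^{N_n}(z_0)=w_n$ and the forward errors decay. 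No analogous one-line argument is available for the $\alpha$ direction: two backward trajectories $\epsilon$-shadowing the backward tail $\langle y_i\rangle_{i\le -N_n}$ need not coincide under this (one-sided) expansivity, because their forward iterates beyond the splice point are uncontrolled, so the backward tails of your candidate trajectories are not pinned down by the expansivity axiom you have. This is precisely the step your inductive construction is meant to supply, and until it is carried out the proof is incomplete.
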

Embedded within the proof of \cite[Theorem 4.2]{GoodMeddaughMitchell} is the following result. It will be important for our characterisations of $P_a$ and $P_e$.

\begin{lemma}\cite{GoodMeddaughMitchell} \label{Lemma_KEYLEMMA_ICT}
Let $(X,f)$ be a dynamical system. For any $A \in ICT_f$ and any $\epsilon>0$ there exists a two-sided asymptotic $\epsilon$-pseudo-orbit $\langle a_i \rangle_{i \in \mathbb{Z}}$ in $A$ such that $\alpha(\langle a_i \rangle)=\omega(\langle a_i \rangle)=A$.
\end{lemma}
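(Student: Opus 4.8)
The plan is to produce $\langle a_i\rangle_{i\in\mathbb Z}$ by concatenating finite $\delta$-chains in $A$ furnished by internal chain transitivity, shrinking the mesh $\delta$ as the index moves away from $0$, and routing the chains through a fixed countable dense subset of $A$ so that this dense set is encountered infinitely often on each side of the origin.

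\emph{Step 1 (set-up).} Since $X$ is a compact metric space, $A$ is separable; fix a countable dense subset $\{q_k\}_{k\ge1}$ of $A$. Fix a point $p_0\in A$ and, for $n\in\mathbb Z\setminus\{0\}$, choose $p_n\in A$ so that each $q_k$ occurs infinitely often among $(p_n)_{n\ge1}$ and infinitely often among $(p_n)_{n\le-1}$ (for instance by listing the $q_k$ in the usual triangular pattern on each side). Choose reals $\delta_1>\delta_2>\cdots\to0$ with $\delta_1<\epsilon$.

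\emph{Step 2 (construction).} For every $n\in\mathbb Z$, internal chain transitivity yields a finite $\delta_{|n|+1}$-chain $\langle c^{(n)}_0,\dots,c^{(n)}_{\ell_n}\rangle$ in $A$ with $c^{(n)}_0=p_n$, $c^{(n)}_{\ell_n}=p_{n+1}$ and $\ell_n\ge1$. Concatenate these blocks along $\mathbb Z$, identifying the last point of block $n$ with the first point of block $n+1$, and positioning block $0$ so that $a_0=p_0$; since each $\ell_n\ge1$, the blocks tile all of $\mathbb Z$ and this produces a sequence $\langle a_i\rangle_{i\in\mathbb Z}$ in $A$. Let $i_n$ be the index with $a_{i_n}=p_n$; then $i_n\to+\infty$ as $n\to+\infty$ and $i_n\to-\infty$ as $n\to-\infty$.

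\emph{Step 3 (verification).} Each consecutive error $d(f(a_i),a_{i+1})$ occurs inside some block $n$, hence is $<\delta_{|n|+1}\le\delta_1<\epsilon$, so $\langle a_i\rangle$ is a two-sided $\epsilon$-pseudo-orbit. Moreover, if $|i|$ is large then the step from $a_i$ to $a_{i+1}$ lies in a block of large index $|n|$, so the error is $<\delta_{|n|+1}$, which tends to $0$; thus $\langle a_i\rangle$ is a two-sided asymptotic pseudo-orbit. Finally, every $a_i$ lies in the closed set $A$, so $\omega(\langle a_i\rangle),\alpha(\langle a_i\rangle)\subseteq A$; and since each $q_k$ equals $a_{i_n}$ for infinitely many $n\ge1$ (giving arbitrarily large indices) and for infinitely many $n\le-1$ (giving arbitrarily negative indices), we get $q_k\in\omega(\langle a_i\rangle)\cap\alpha(\langle a_i\rangle)$ for every $k$, whence $A=\overline{\{q_k\}}\subseteq\omega(\langle a_i\rangle)\cap\alpha(\langle a_i\rangle)$. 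Therefore $\alpha(\langle a_i\rangle)=\omega(\langle a_i\rangle)=A$.

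The only real work is the bookkeeping in Step 2 — aligning the blocks on the two sides of the origin and confirming that the mesh genuinely decays along the sequence in both directions. There is no analytic difficulty: the statement is essentially a packaging of internal chain transitivity together with separability of $A$.
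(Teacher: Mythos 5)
Your proof is correct, and it is essentially the standard argument embedded in the cited proof from \cite{GoodMeddaughMitchell}: concatenate finite chains inside $A$ with mesh decreasing in both directions, arranged so that a dense subset of $A$ recurs cofinally at both ends, which forces both limit sets to equal $A$ while keeping all jumps below $\epsilon$. No gaps.
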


The system $(X,f)$ has \emph{limit shadowing}, also called asymptotic shadowing, if every asymptotic pseudo-orbit $\langle x_i \rangle_{i=0} ^\infty$ is \emph{asymptotically shadowed} (i.e.\ there exists $z \in X$ for which $d(f^i(z), x_i) \to 0$ as $i \to \infty$). It has \emph{backward limit shadowing} if every backward asymptotic pseudo-orbit $\langle x_i \rangle_{i\leq 0} $ is backward asymptotically shadowed (i.e.\ there exists a backward trajectory $\langle z_i \rangle_{i \leq 0}$ for which $d(z_i, x_i) \to 0$ as $i \to -\infty$). Finally the system has \emph{two-sided limit shadowing} if every two-sided asymptotic pseudo-orbit $\langle x_i \rangle_{i\in \mathbb{Z}}$ is two-sided asymptotically shadowed (i.e.\ there exists a full trajectory $\langle z_i \rangle_{i \in \mathbb{Z}}$ for which $d(z_i, x_i) \to 0$ as $i \to \pm \infty$). The following proposition combines results from \cite{BarwellGoodOprochaRaines} and \cite{GoodMeddaughMitchell}.

\begin{proposition}\cite{BarwellGoodOprochaRaines,GoodMeddaughMitchell}
If $(X,f)$ has limit (resp.\ backward limit) shadowing then $\omega_f=ICT_f$ (resp.\ $\alpha_f=ICT_f$).
\end{proposition}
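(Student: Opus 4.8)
The plan is to establish the two reverse inclusions, since the forward ones $\omega_f \subseteq ICT_f$ and $\alpha_f \subseteq ICT_f$ are already supplied by Lemma \ref{LemmaAlphaOmegaAreICT}. I will argue the $\omega$-case under limit shadowing in detail; the $\alpha$-case under backward limit shadowing is entirely analogous. So fix $A \in ICT_f$; the goal is to exhibit a point whose $\omega$-limit set is exactly $A$.

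First I would invoke Lemma \ref{Lemma_KEYLEMMA_ICT} (say with $\epsilon = 1$) to obtain a two-sided asymptotic pseudo-orbit $\langle a_i \rangle_{i \in \mathbb{Z}}$ contained in $A$ with $\omega(\langle a_i \rangle) = A$. Its forward tail $\langle a_i \rangle_{i \geq 0}$ is then an asymptotic pseudo-orbit in the sense defined above (since $d(f(a_i),a_{i+1}) \to 0$ as $i \to +\infty$), so limit shadowing furnishes a point $z \in X$ with $d(f^i(z), a_i) \to 0$ as $i \to \infty$.

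The remaining step is to verify that $\omega(z) = A$. This rests on the elementary observation that if two sequences in the compact space $X$ satisfy $d(f^i(z), a_i) \to 0$, then they have the same set of accumulation points: any subsequential limit of one is a subsequential limit of the other, by the triangle inequality. Hence $\omega(z) = \omega(\langle a_i \rangle_{i \geq 0}) = A$, so $A \in \omega_f$, completing the $\omega$-case.

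For the $\alpha$-case, apply backward limit shadowing to the backward tail $\langle a_i \rangle_{i \leq 0}$ of the pseudo-orbit from Lemma \ref{Lemma_KEYLEMMA_ICT} (now using that $\alpha(\langle a_i \rangle) = A$) to obtain a backward trajectory $\langle z_i \rangle_{i \leq 0}$ with $d(z_i, a_i) \to 0$ as $i \to -\infty$, and extend it to a full trajectory by setting $z_i = f^i(z_0)$ for $i > 0$. The same accumulation-point argument gives $\alpha(\langle z_i \rangle_{i \in \mathbb{Z}}) = \alpha(\langle a_i \rangle) = A$, so $A \in \alpha_f$. I expect no genuine obstacle here: all of the substantive work has been absorbed into Lemma \ref{Lemma_KEYLEMMA_ICT}, and what remains is the routine fact that asymptotically close sequences share their limit sets, together with the trivial extension of a backward trajectory to a full one.
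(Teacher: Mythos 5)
Your argument is correct, and since the paper states this proposition without proof (quoting it from the cited references), the natural comparison is with the standard argument there, which is exactly what you give: realise $A\in ICT_f$ as the $\omega$-limit (resp.\ $\alpha$-limit) set of an asymptotic pseudo-orbit --- here conveniently packaged by Lemma \ref{Lemma_KEYLEMMA_ICT} --- and then transfer the limit set to the asymptotically shadowing point (resp.\ backward trajectory, trivially extended forward to a full trajectory). The one step you rightly flag, that two sequences in a compact space with $d(f^i(z),a_i)\to 0$ have identical accumulation sets, is indeed an immediate triangle-inequality argument, so there is no gap.
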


\section{Characterising properties $P_e$ and $P_a$}\label{Section_characterise_P_e_and_P_a}

\subsection{Property $P_e$}

Recall property $P_e$: 

`For any $A \in ICT_f$ there exists a full trajectory $\langle x_i \rangle_{i \in \mathbb{Z}}$ such that $\alpha(\langle x_i \rangle)=\omega(\langle x_i \rangle)=A$.'

It is obvious, given Lemma \ref{LemmaAlphaOmegaAreICT}, that a necessary condition for $P_e$ is $\alpha_f=\omega_f=ICT_f$. A natural starting point, therefore, is to ask if this is also sufficient. It turns out that this is not the case: in Example \ref{Example_SquareSystem} we construct a homeomorphism for which $\alpha_f=\omega_f=ICT_f$ but for which $P_e$ is not satisfied.

\begin{example}\label{Example_SquareSystem}
We will build up the points in $X$ and define the map $f \colon X \to X$ on them as we go. Let $X$ consist of the following points in the Cartesian plane. Let $(0,0)$ be a fixed point, so that $f(0,0)=(0,0)$. Let $(1,1), (-1,1), (-1,-1)$ and $(1,-1)$ also be fixed points and consider the following subsets of the $2 \times 2$ square $S$ with these four points as vertices:
\[A = \left\{ \left(\pm \frac{2^n- 1}{2^n},1\right) \mid n \in \mathbb{N}_0\right\},\]
\[B = \left\{ \left(-1, \pm \frac{2^n- 1}{2^n}\right)  \mid n \in \mathbb{N}_0\right\},\]
\[C = \left\{ \left(\pm\frac{2^n- 1}{2^n},-1\right) \mid n \in \mathbb{N}_0\right\},\]
\[D = \left\{ \left(1,\pm\frac{2^n- 1}{2^n}\right) \mid n \in \mathbb{N}_0\right\}.\]
We now define the map $f$ on these four sets so that points move anticlockwise between the two vertices which are the limit points of said set. 

For example, for $A$ the vertices which form the limit points of $A$ are $(1,1)$ and $(-1,1)$. For any $z \in A$, with $z= (\frac{2^n- 1}{2^n},1)$ for some $n \geq 1$, let $f(z)= (\frac{2^{n-1}- 1}{2^{n-1}},1)$. Let $f(0,1)=(-\frac{1}{2},1)$. Finally, for any $z \in A$, with $z= (-\frac{2^n- 1}{2^n},1)$ for some $n \geq 1$, let $f(z)= (-\frac{2^{n+1}- 1}{2^{n+1}},1)$. Define $f$ on $B,C$ and $D$ similarly, with this anticlockwise movement. We let $Q= \{(1,1), (-1,1), (-1,-1), (1,-1)\} \cup A \cup B \cup C \cup D$.

Next, we insert the points given by $(0,\frac{1}{2^n})$ for each $n \geq 2$. For each of these, we let $f(0,\frac{1}{2^n})= (0,\frac{1}{2^{n-1}})$.

We now, for each $n \in \mathbb{N}$ insert a finite subset of a square as follows: Insert the points $(\frac{2^n- 1}{2^n},\frac{2^n- 1}{2^n}), (-\frac{2^n- 1}{2^n},\frac{2^n- 1}{2^n}),(-\frac{2^n- 1}{2^n},-\frac{2^n- 1}{2^n})$ and $(\frac{2^n- 1}{2^n},-\frac{2^n- 1}{2^n})$ (these are the vertices). For each $n \in \mathbb{N}$ we insert the following finite subsets of these squares:
\[A_n = \left\{ \left(\pm \frac{2^m- 1}{2^m},\frac{2^n- 1}{2^n}\right) \mid m \in \mathbb{N}_0 \text{ and } \lvert \frac{2^m- 1}{2^m} \rvert < \frac{2^n- 1}{2^n} \right\},\]

\[B_n = \left\{ \left(-\frac{2^n- 1}{2^n}, \pm \frac{2^m- 1}{2^m}\right) \mid m \in \mathbb{N}_0 \text{ and } \lvert \frac{2^m- 1}{2^m} \rvert < \frac{2^n- 1}{2^n} \right\},\]

\[C_n = \left\{ \left(\pm \frac{2^m- 1}{2^m},-\frac{2^n- 1}{2^n}\right) \mid m \in \mathbb{N}_0 \text{ and } \lvert \frac{2^m- 1}{2^m} \rvert < \frac{2^n- 1}{2^n} \right\},\]

\[D_n = \left\{ \left(\frac{2^n- 1}{2^n}, \pm \frac{2^m- 1}{2^m}\right) \mid m \in \mathbb{N}_0 \text{ and } \lvert \frac{2^m- 1}{2^m} \rvert < \frac{2^n- 1}{2^n} \right\}.\]

Let $Q_n=\{(\frac{2^n- 1}{2^n},\frac{2^n- 1}{2^n}), (-\frac{2^n- 1}{2^n},\frac{2^n- 1}{2^n}),(-\frac{2^n- 1}{2^n},-\frac{2^n- 1}{2^n}), (\frac{2^n- 1}{2^n},-\frac{2^n- 1}{2^n})\} \cup A_n \cup B_n \cup C_n \cup D_n$.

For each $n \in \mathbb{N}$, let $f(\frac{2^n- 1}{2^n},\frac{2^n- 1}{2^n})=(\frac{2^n- 1}{2^n},\frac{2^{n+1}- 1}{2^{n+1}})$. All points in $Q_n \setminus \{ (\frac{2^n- 1}{2^n},\frac{2^n- 1}{2^n})\}$, as before, move anticlockwise around the finite set $Q_n$ under $f$. So that, in $Q_1$ for example, $f(0, \frac{1}{2})=(-\frac{1}{2}, \frac{1}{2}), f(-\frac{1}{2}, \frac{1}{2})= (-\frac{1}{2}, 0), f(-\frac{1}{2}, 0)=(-\frac{1}{2}, -\frac{1}{2}), \ldots f(\frac{1}{2}, 0)= (\frac{1}{2}, \frac{1}{2})$. 

It follows that the $\omega$-limit set of every point, apart from $(0,0)$, which lies inside the region bounded by $Q$ in the plane is $Q$, whilst their $\alpha$-limit set is $\{(0,0)\}$.

Now input the points $(0,y) \in \mathbb{R}^2$ such that $y=\frac{3}{2}+\frac{2^n- 1}{2^{n+1}}$ for some $n \in \mathbb{N}_0$. Let $f(0,\frac{3}{2}+\frac{2^n- 1}{2^{n+1}})=(0,\frac{3}{2}+\frac{2^{n+1}- 1}{2^{n+2}})$. Let the limit this sequence, $(0, 2)$, be a fixed point under $f$.

Now, for each $n \in \mathbb{N}$ insert a finite subset of a square as follows: Insert the points $(1+\frac{1}{2^n},1+\frac{1}{2^n}), (1-\frac{1}{2^n},1+\frac{1}{2^n}),(-1-\frac{1}{2^n},-1-\frac{1}{2^n})$ and $(1+\frac{1}{2^n},-1-\frac{1}{2^n})$ (these are the vertices). For each $n \in \mathbb{N}$ we insert the following finite subsets of these squares:

\[E_n = \left\{ \left(\pm \frac{2^m- 1}{2^m},1+ \frac{1}{2^n}\right) \mid m \in \mathbb{N}_0 \text{ and } \lvert \frac{2^m- 1}{2^m} \rvert < \frac{2^n- 1}{2^n} \right\},\]

\[F_n = \left\{ \left(-1-\frac{1}{2^n}, \pm \frac{2^m- 1}{2^m}\right) \mid m \in \mathbb{N}_0 \text{ and } \lvert \frac{2^m- 1}{2^m} \rvert < \frac{2^n- 1}{2^n} \right\},\]

\[G_n = \left\{ \left(\pm \frac{2^m- 1}{2^m},-1-\frac{1}{2^n}\right) \mid m \in \mathbb{N}_0 \text{ and } \lvert \frac{2^m- 1}{2^m} \rvert < \frac{2^n- 1}{2^n} \right\},\]

\[H_n = \left\{ \left(1+\frac{1}{2^n}, \pm \frac{2^m- 1}{2^m}\right) \mid m \in \mathbb{N}_0 \text{ and } \lvert \frac{2^m- 1}{2^m} \rvert < \frac{2^n- 1}{2^n} \right\}.\]

Let $R_n=\{(1+\frac{1}{2^n},1+\frac{1}{2^n}), (-1-\frac{1}{2^n},1+\frac{1}{2^n}),(-1-\frac{1}{2^n},-1-\frac{1}{2^n}), (1+\frac{1}{2^n},-1-\frac{1}{2^n})\} \cup E_n \cup F_n \cup G_n \cup H_n$.

For each $n \geq 2$, let $f( -\frac{2^{n-1}- 1}{2^{n-1}},1+\frac{1}{2^n})=(-\frac{2^{n-1}- 1}{2^{n-1}},1+\frac{1}{2^{n-1}})$. 

For each $n \in \mathbb{N}$, let all points in $R_n \setminus \{ ( -\frac{2^{n-1}- 1}{2^{n-1}},1+\frac{1}{2^n})\}$, as before, move anticlockwise around the finite set $R_n$ under $f$. So that, in $R_1$ for example, $f(-\frac{1}{2}, \frac{3}{2})=(-\frac{3}{2}, \frac{3}{2}), f(-\frac{3}{2}, \frac{3}{2})= (-\frac{3}{2}, \frac{1}{2}), f(-\frac{3}{2}, \frac{1}{2})=(-\frac{3}{2}, 0), \ldots f(\frac{3}{2}, \frac{3}{2})= (\frac{1}{2}, \frac{3}{2}), f(\frac{1}{2},\frac{3}{2})=(0,\frac{3}{2})$. 

Then $\alpha_f= \omega_f =ICT_f$ but property $P_e$ is not satisfied.
\end{example}
It is easily observed by looking at Figure \ref{FigureSquareExample} that in Example \ref{Example_SquareSystem}, $\alpha_f$, $\omega_f$ and $ICT_f$ are all equal to \[\{ Q, \{(0,0)\},\{(1,1)\},\{(-1,1)\},\{(-1,-1)\}, \{(1,-1)\}, \{(0,2)\}\}.\]
However it is also clear that no full trajectory has $Q$ as both its $\alpha$-limit set and $\omega$-limit set.
\begin{figure}[h!] \label{FigureSquareExample}
\centering
\begin{tikzpicture}[>=stealth',scale=3.2]

	\fill[black] (0,0) circle (.2mm);
	\node[circle,draw=white] (origin) at (0,0) {};
	\drawloop[->,stretch=0.6]{origin}{225}{320};

	\fill[black] (-1,-1) circle (.2mm);
	\node[circle,draw=white] (origin) at (-1,-1) {};
	\drawloop[->,stretch=0.4]{origin}{180}{300};

	\fill[black] (1,1) circle (.2mm);
	\node[circle,draw=white] (end) at (1,1) {};
	\drawloop[<-,stretch=0.4]{end}{320}{440};;

	\fill[black] (-1,1) circle (.2mm);
	\node[circle,draw=white] (origin) at (-1,1) {};
	\drawloop[->,stretch=0.4]{origin}{100}{220};
	
	\fill[black] (1,-1) circle (.2mm);
	\node[circle,draw=white] (end) at (1,-1) {};
	\drawloop[<-,stretch=0.4]{end}{280}{400};;
	
	\foreach \y in {0,0.5,-.5,0.75,-.75,0.875,-.875,}
		{
		\fill[black] (-1,\y) circle (.2mm);
		}
	\foreach \y/\x in {0.845/0.78,0.72/.53,.47/.03,-.03/-.47,-.53/-.72,-.78/-.845}
		{
		\draw[->] (-1,\y) -- (-1,\x);
		}
		\draw[dotted, -] (-1,1) -- (-1,0.875);
		\draw[dotted, -] (-1,-.875) -- (-1,-1);

		\foreach \y in {0,0.5,-.5,0.75,-.75,0.875,-.875,}
		{
		\fill[black] (1,\y) circle (.2mm);
		}
		\foreach \y/\x in {0.845/0.78,0.72/.53,.47/.03,-.03/-.47,-.53/-.72,-.78/-.845}
		{
		\draw[->] (1,\x) -- (1,\y);
		}
		\draw[dotted, -] (1,0.875) -- (1,1);
		\draw[dotted, -] (1,-1) -- (1,-.875);

		\foreach \x in {0,0.5,-.5,0.75,-.75,0.875,-.875,}
		{
		\fill[black] (\x,-1) circle (.2mm);
		}
		\foreach \y/\x in {0.845/0.78,0.72/.53,.47/.03,-.03/-.47,-.53/-.72,-.78/-.845}
		{
		\draw[->] (\x,-1) -- (\y,-1);
		}
		\draw[dotted, -] (0.875,-1) -- (1,-1);
		\draw[dotted, -] (-1,-1) -- (-.875,-1);

		\foreach \x in {0,0.5,-.5,0.75,-.75,0.875,-.875,}
		{
		\fill[black] (\x,1) circle (.2mm);
		}
			\foreach \y/\x in {0.845/0.78,0.72/.53,.47/.03,-.03/-.47,-.53/-.72,-.78/-.845}
		{
		\draw[->] (\y,1) -- (\x,1);
		}
		\draw[dotted, -] (1,1) -- (0.875,1);
		\draw[dotted, -] (-.875,1) -- (-1,1);
		
		\foreach \y in {0,0.5,0.25, 0.125}
		{
		\fill[black] (0,\y) circle (.2mm);
		}
			\foreach \x/\y in {0.28/0.47,0.15/.22}
		{
		\draw[->] (0,\x) -- (0,\y);
		}
		\draw[dotted, -] (0,0) -- (0,.125);
		
		\foreach \x in {0.5,-.5,0}
		{
		\fill[black] (\x,.5) circle (.2mm);
		}
		\draw[->] (-0.03,0.5) -- (-.47,0.5);
		
		\foreach \x in {0.5,-.5,0}
		{
		\fill[black] (\x,-.5) circle (.2mm);
		}
		\draw[->]  (-.47,-0.5) -- (-0.03,-0.5);
		\draw[->]  (.03,-.5) -- (.47,-0.5);
		
		\foreach \y in {0.5,-.5,0}
		{
		\fill[black] (.5,\y) circle (.2mm);
		}
		\draw[->]  (.5,-.47) -- (.5,-0.03);
		\draw[->]  (.5,.03) -- (.5,0.47);
		
		\foreach \y in {0.5,-.5, 0}
		{
		\fill[black] (-.5,\y) circle (.2mm);
		}
		\draw[->]  (-.5,.47) -- (-0.5,.03);
		\draw[->]  (-.5, -.03) -- (-0.5,-.47);
		
	\draw[->]  (.5,.53) -- (.5,.72);
		\foreach \x in {0.5,-.5,0,-.75,.75}
		{
		\fill[black] (\x,.75) circle (.2mm);
		}
	    \draw[->] (.47,0.75) -- (0.03,0.75);
		\draw[->] (-0.03,0.75) -- (-.47,0.75);
        \draw[->] (-0.53,0.75) -- (-.72,0.75);

		\foreach \x in {0.5,-.5,0,.75,-.75}
		{
		\fill[black] (\x,-.75) circle (.2mm);
		}
		 \draw[->] (-.72,-0.75) -- (-0.53,-0.75);
		 	\draw[->] (-.47,-0.75) -- (-0.03,-0.75);
		 	\draw[->]  (.03,-.75) -- (.47,-0.75);
		 	 \draw[->] (0.53,-0.75) -- (.72,-0.75);
		
		\foreach \y in {0.5,-.5,0, 0.75,-.75}
		{
		\fill[black] (.75,\y) circle (.2mm);
		}
		
		 \draw[->] (0.75,-.72) -- (.75,-0.53);
		 	\draw[->] (.75,-.47) -- (.75,-0.03);
		 	\draw[->]  (.75,.03) -- (.75,.47);
		 	 \draw[->] (.75,0.53) -- (.75,.72);
		
		\foreach \y in {0.5,-.5, 0}
		{
		\fill[black] (-.75,\y) circle (.2mm);
		}
			 \draw[->] (-.75,.72) -- (-.75, 0.53);
		 	\draw[->] (-.75,.47) -- (-.75,0.03);
		 	\draw[->]  (-.75, -.03) -- (-.75,-.47);
		 	 \draw[->] (-.75, -0.53) -- (-.75, -.72);
		 	 
		 	\draw[->]  (.75,.78) -- (.75,.845);
		 	\fill[black] (0.75,0.875) circle (.2mm);
		 		\draw[dotted, ->] (0.75,0.875) -- (-0.845,0.875);
		 		\draw[dotted, ->] (-0.875,0.845) -- (-0.875,-0.845);
		 			\draw[dotted, ->] (-0.845,-0.875) -- (0.845,-0.875);
		 			\draw[dotted, -] (0.875,-0.845) -- (0.875,0.9075);

		
	\fill[black] (0,2) circle (.2mm);
	\node[circle,draw=white] (end) at (0,2) {};
	\drawloop[<-,stretch=0.4]{end}{400}{520};;

		\foreach \y in {1.5,1.75,1.875}
		{
		\fill[black] (0,\y) circle (.2mm);
		}
			\foreach \x/\y in {1.53/1.72,1.78/1.845}
		{
		\draw[->] (0,\x) -- (0,\y);
		}
		\draw[dotted, -] (0,1.845) -- (0,2);

		\foreach \x in {0.5,-.5,0,1.5,-1.5}
		{
		\fill[black] (\x,1.5) circle (.2mm);
		}
		\draw[->] (0.47,1.5) -- (0.03,1.5);
		\draw[->] (1.47,1.5) -- (0.53,1.5);
		\draw[->] (-0.53,1.5) -- (-1.47,1.5);
		
		\foreach \x in {0.5,-.5,0,1.5,-1.5}
		{
		\fill[black] (\x,-1.5) circle (.2mm);
		}
		\draw[->]  (-1.47,-1.5) -- (-0.53,-1.5);
		\draw[->]  (-.47,-1.5) -- (-0.03,-1.5);	
		\draw[->]  (.03,-1.5) -- (.47,-1.5);
		\draw[->]  (.53,-1.5) -- (1.47,-1.5);
		
		\foreach \y in {0.5,-.5,0,1.5,-1.5}
		{
		\fill[black] (1.5,\y) circle (.2mm);
		}
		\draw[->]  (1.5,-1.47) -- (1.5,-0.53);
		\draw[->]  (1.5,-.47) -- (1.5,-0.03);	
		\draw[->]  (1.5,.03) -- (1.5,.47);
		\draw[->]  (1.5,.53) -- (1.5,1.47);
		
		\foreach \y in {0.5,-.5,0,1.5,-1.5}
		{
		\fill[black] (-1.5,\y) circle (.2mm);
		}
		\draw[->] (-1.5,1.47) -- (-1.5,0.53);
		\draw[->] (-1.5,0.47) -- (-1.5,0.03);
		\draw[->] (-1.5,-0.03) -- (-1.5,-0.47);
		\draw[->] (-1.5,-0.53) -- (-1.5,-1.47);
		
	\draw[->]  (-.5,1.28) -- (-.5,1.47);
		\foreach \x in {0.5,-.5,0,-.75,.75, 1.25, -1.25}
		{
		\fill[black] (\x,1.25) circle (.2mm);
		}
		\draw[->] (1.22,1.25) -- (.78,1.25);
		\draw[->] (.72,1.25) -- (0.53,1.25);
	    \draw[->] (.47,1.25) -- (0.03,1.25);
		\draw[->] (-0.03,1.25) -- (-.47,1.25);
        \draw[->] (-0.78,1.25) -- (-1.22,1.25);

		\foreach \x in {0.5,-.5,0,-.75,.75, 1.25, -1.25}
		{
		\fill[black] (\x,-1.25) circle (.2mm);
		}
			\draw[->] (-1.22,-1.25) -- (-.78,-1.25);
		 \draw[->] (-.72,-1.25) -- (-0.53,-1.25);
		 	\draw[->] (-.47,-1.25) -- (-0.03,-1.25);
		 	\draw[->]  (.03,-1.25) -- (.47,-1.25);
		 	 \draw[->] (0.53,-1.25) -- (.72,-1.25);
		 	 \draw[->] (0.78,-1.25) -- (1.22,-1.25);
		
		\foreach \y in {0.5,-.5,0,-.75,.75, 1.25, -1.25}
		{
		\fill[black] (1.25,\y) circle (.2mm);
		}
		 \draw[->] (1.25,-1.22) -- (1.25,-0.78);
		 \draw[->] (1.25,-.72) -- (1.25,-0.53);
		 	\draw[->] (1.25,-.47) -- (1.25,-0.03);
		 	\draw[->]  (1.25,.03) -- (1.25,.47);
		 	 \draw[->] (1.25,0.53) -- (1.25,.72);
		 	  \draw[->] (1.25,.78) -- (1.25,1.22);
		
		\foreach \y in {0.5,-.5, 0,-.75,.75, 1.25, -1.25}
		{
		\fill[black] (-1.25,\y) circle (.2mm);
		}
		 \draw[->] (-1.25,1.22) -- (-1.25, .78);
			 \draw[->] (-1.25,.72) -- (-1.25, 0.53);
		 	\draw[->] (-1.25,.47) -- (-1.25,0.03);
		 	\draw[->]  (-1.25, -.03) -- (-1.25,-.47);
		 	 \draw[->] (-1.25, -0.53) -- (-1.25,-.72);
		 	 \draw[->] (-1.25,-.78) -- (-1.25, -1.22);

		 	\draw[->]  (-.75,1.15) -- (-.75,1.22);
		 	\fill[black] (-0.75,1.125) circle (.2mm);
		 		\draw[dotted, ->] (1.095,1.125) -- (-.72,1.125);
		 		\draw[dotted, ->] (1.125,-1.095) -- (1.125,1.095);
		 			\draw[dotted, ->] (-1.095,-1.125) -- (1.095,-1.125);
		 			\draw[dotted, ->] (-1.125,1.095) -- (-1.125,-1.095);
		 			\draw[dotted, ->] (-0.905,1.125) -- (-1.095,1.125);

\end{tikzpicture}
\caption{The construction from Example \ref{Example_SquareSystem}}
\end{figure}
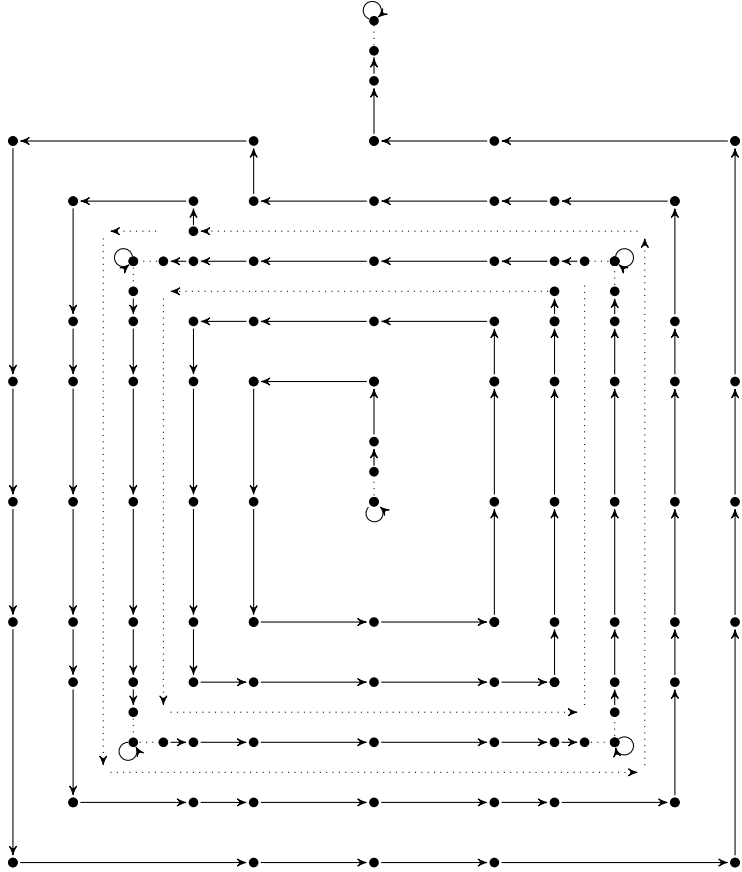

\medskip

In order to characterise $P_e$, it is will be helpful to consider the characterisations of when $\omega_f=ICT_f$ and $\alpha_f=ICT_f$. A system $(X,f)$ has the \emph{orbital limit shadowing} property, as introduced by Pilyugin \cite{Pilyugin2007}, if for any asymptotic pseudo-orbit $\langle x_i \rangle_{i\geq 0}$ there exists a point $z\in X$ such that $\omega(z) = \omega(\langle x_i \rangle)$. In \cite{GoodMeddaugh2016} the authors introduce two novel variants of shadowing, namely \emph{asymptotic orbital shadowing} and \emph{asymptotic strong orbital shadowing}. Whilst we omit the definitions here, Good and Meddaugh \cite[Theorem 22]{GoodMeddaugh2016} show that these are equivalent to orbital limit shadowing. They go on to show that (see \cite[Theorem 22]{GoodMeddaugh2016}) $\omega_f=ICT_f$ if and only if $(X,f)$ has orbital limit shadowing. Backward analogues of orbital limit shadowing, asymptotic orbital shadowing and asymptotic strong orbital shadowing, which were also shown to be equivalent to each other, were introduced in \cite{GoodMeddaughMitchell} and demonstrated to characterise when $\alpha_f=ICT_f$ (see \cite[Theorem 5.12]{GoodMeddaughMitchell}). The system $(X,f)$ exhibits \emph{backward orbital limit shadowing} (the backward analogue of orbital limit shadowing) if for each backward asymptotic pseudo-orbit $\langle x_i \rangle _{i \leq 0}$ there exists a backward trajectory $\langle z_i \rangle _{i \leq 0}$ such that $\alpha(\langle z_i \rangle)=\alpha(\langle x_i \rangle)$. It follows that the system constructed in Example \ref{Example_SquareSystem} has both orbital limit shadowing and backward orbital limit shadowing. (As an aside, we remark that there is no general entailment between orbital limit shadowing and backward orbital limit shadowing: In \cite{GoodMeddaughMitchell} we construct an example for which $\alpha_f =ICT_f$ but $\omega_f \neq ICT_f$ and another where $\omega_f =ICT_f$ but $\alpha_f \neq ICT_f$.)

These results suggest the following shadowing property.

\begin{definition}
A system $(X,f)$ has \emph{two-sided orbital limit shadowing} if for any two-sided asymptotic pseudo-orbit $\langle x_i \rangle _{i \in \mathbb{Z}}$ there exists a full trajectory $\langle z_i \rangle _{i \in \mathbb{Z}}$ such that $\alpha(\langle z_i \rangle)=\alpha(\langle x_i \rangle)$ and $\omega(\langle z_i \rangle) = \omega(\langle x_i \rangle)$.
\end{definition}

This property is strictly weaker than the two-sided limit shadowing property studied by various authors (e.g.\ \cite{Carvalho2015, Carvalho2018, CarvalhoKwietniak,GoodMaciasMeddaughMitchellThomas, Oprocha2014}). An irrational rotation of the circle will have two-sided orbital limit shadowing but not two-sided limit shadowing. We will see in Corollary \ref{corollary_sufficient_P_e} that this property is sufficient for $P_e$. However, it is too strong for our purposes. Indeed, $f \colon [0,1] \to [0,1] \colon x \mapsto x^2$ satisfies property $P_e$ but it does not have two-sided orbital limit shadowing. To see this consider the two-sided asymptotic pseudo-orbit $\langle x_i \rangle_{i \in \mathbb{Z}}$, given by $x_i =0$ for $i \leq 0$ and $x_i=1$ for $i>0$. Then $\omega(\langle x_i \rangle)= \{1\}$ and $\alpha(\langle x_i \rangle)= \{0\}$. However, the only full trajectory whose $\omega$-limit set is $\{1\}$ is given by $z_i =1$ for all $i \in \mathbb{Z}$, but $\alpha(1)=\{1\} \neq \alpha(\langle x_i \rangle)$. The strength of this shadowing property seems partly to lie in the lack of restriction in where the pseudo-orbit may `jump'. To overcome this we suggest the following weakening. 

\begin{definition}
A system $(X,f)$ has \emph{$\delta$-restricted two-sided orbital limit shadowing} if there exists $\delta>0$ such that for any two-sided asymptotic $\delta$-pseudo-orbit $\langle x_i \rangle _{i \in \mathbb{Z}}$ there exists a full trajectory $\langle z_i \rangle _{i \in \mathbb{Z}}$ such that $\alpha(\langle z_i \rangle)=\alpha(\langle x_i \rangle)$ and $\omega(\langle z_i\rangle) = \omega(\langle x_i \rangle)$.
\end{definition}

We will see (Corollary \ref{corollary_sufficient_P_e}), that $\delta$-restricted two-sided orbital limit shadowing is indeed sufficient for $P_e$, however, as Example \ref{Example_WeakLimit_not_necessary} shows, it is not necessary.

\begin{example}\label{Example_WeakLimit_not_necessary}
For each $n \in \mathbb{N}$, let $X_n$ be the circle $\mathbb{R}/\mathbb{Z} \times \{\frac{1}{n}\}$ and let $f_n\colon X_n \to X_n$ be given by $x \mapsto x + \alpha$, where $\alpha$ is some fixed irrational number. Let $X_0 = \mathbb{R}/\mathbb{Z} \times \{0\}$ and $f_0\colon X_0 \to X_0$ also be given by $x \mapsto x + \alpha$. Take $X= \bigcup_{n=0} ^\infty X_n$ and let $f \colon X \to X$ be defined by saying that, for any $n \in \mathbb{N}_0$ and any $x \in X_n$, $f(x)= f_n(x)$.
Then $(X,f)$ has property $P_e$ but not $\delta$-restricted two-sided orbital limit shadowing.
\end{example}
\begin{figure}[h!]
\centering
\begin{tikzpicture}[>=stealth',scale=3.2]
 \draw[-] (0,0.5) -- (1,0.5);
\draw[-] (0,0.25) -- (1,0.25);
\draw[-] (0,0.125) -- (1,0.125);
\draw[-] (0,0.0625) -- (1,0.0625);
\draw[-] (0,0.03125) -- (1,0.03125);
\draw[dotted, -] (0,0.015625) -- (1,0.015625);
\draw[-] (0,0) -- (1,0);
\end{tikzpicture}
\caption{Example \ref{Example_WeakLimit_not_necessary}}
\end{figure}

To see that $(X,f)$ in Example \ref{Example_WeakLimit_not_necessary} has property $P_e$ it suffices to note that it is simply composed of disjoint minimal systems. Since only one system, $(X_0, f_0)$, is a limit of other systems we get that $ICT_f=\{ X_n \mid n \in \mathbb{N}_0\}$. But, for any $n \in \mathbb{N}_0$, the orbit of each point in $X_n$ is dense in $X_n$. Property $P_e$ now follows. Now suppose that the system has $\delta$-restricted two-sided orbital limit shadowing. Let $\delta>0$ bear witness to this and let $\frac{1}{n} <\delta$. Let $x=(0,0)$ and $y=(0, \frac{1}{n})$. Then $\langle x_i \rangle_{i \in \mathbb{Z}}$ where $x_i= f^i(x)$ for $i \geq 0$ and $x_i= f^{i}(y)$ for $i < 0$ is a two-sided asymptotic $\delta$-pseudo-orbit which is not two-sided asymptotically shadowed, a contradiction.


Motivated by the backward and forward orbital limit shadowing properties, we define the following novel variant shadowing which does in fact characterise $P_e$ (Theorem \ref{thm_char_full_orbit_P_e}).

\begin{definition}
A system $(X,f)$ has \emph{$\gamma$-restricted two-sided orbital limit shadowing} if for any two-sided asymptotic pseudo-orbit $\langle x_i \rangle_{i \in \mathbb{Z}}$ such that $\alpha( \langle x_i \rangle )=\omega (\langle x_i \rangle)$ there exists a full trajectory $\langle z_i \rangle_{i \in \mathbb{Z}}$ such that 
\begin{enumerate}
    \item $\alpha (\langle z_i \rangle)=\alpha (\langle x_i \rangle )$; and, 
    \item $\omega (\langle z_i \rangle)=\omega (\langle x_i \rangle)$.
\end{enumerate}
\end{definition}

Before we prove Theorem \ref{thm_char_full_orbit_P_e} we require the following lemma.

\begin{lemma}\textup{\cite{Hirsch}}\label{LemmaOmegaAlphaAsymPseudoAreICT}
Let $(X,f)$ be a dynamical system where $X$ is a (not necessarily compact) metric space. The $\alpha$-limit set (resp.\ $\omega$-limit set) of any backward (resp.\ forward) pre-compact asymptotic pseudo-orbit is internally chain transitive. In particular, when $X$ is compact, all such limit sets are in $ICT_f$.
\end{lemma}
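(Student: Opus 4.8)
The plan is to adapt, to the asymptotic‑pseudo‑orbit setting, the classical argument that genuine limit sets are internally chain transitive (Lemma~\ref{LemmaAlphaOmegaAreICT}), with the condition $d(f(x_i),x_{i+1})\to 0$ playing the role formerly played by the exact relation $f(x_i)=x_{i+1}$. Reversing time interchanges the two halves of the statement, so it suffices to treat the forward case. Fix a forward pre‑compact asymptotic pseudo‑orbit $\langle x_i\rangle_{i\geq 0}$, set $K=\overline{\{x_i\mid i\geq 0\}}$ (compact by hypothesis, so $f|_K$ is uniformly continuous) and $A=\omega(\langle x_i\rangle)=\bigcap_{M}\overline{\{x_n\mid n>M\}}$. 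Each set $\overline{\{x_n\mid n>M\}}$ is a closed subset of the compact set $K$, hence compact, so $A$ is a nested intersection of nonempty compact sets and is therefore nonempty and compact (in particular closed); when $X$ is compact this already places $A$ in $2^X$, and it will lie in $ICT_f$ once internal chain transitivity is proved.

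First I would record the \emph{absorption fact}: for every $\epsilon>0$ there is $M_\epsilon$ with $x_n\in B_\epsilon(A)$ for all $n>M_\epsilon$. If this failed, some $\epsilon>0$ would admit infinitely many $n$ with $x_n\notin B_\epsilon(A)$; these points lie in the compact set $K$, so some subsequence of them converges to a point $z$, whence $z\in A$ (being an accumulation point of $\langle x_i\rangle$) while $z\in X\setminus B_\epsilon(A)$ (a closed set) — a contradiction. This step is the only place pre‑compactness is genuinely used.

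Now fix $a,b\in A$ and $\delta>0$. By uniform continuity of $f|_K$ choose $\eta\in(0,\delta/4)$ such that $u,v\in K$ with $d(u,v)<\eta$ imply $d(f(u),f(v))<\delta/4$. Pick $M$ large enough that $d(f(x_i),x_{i+1})<\delta/4$ whenever $i>M$ and $x_n\in B_\eta(A)$ whenever $n>M$. Since $a$ is an accumulation point of $\langle x_i\rangle$, choose $p>M$ with $d(x_p,a)<\eta$; since $b$ is an accumulation point, choose $q>p$ with $d(x_q,b)<\eta$. For each $i\in\{p,\dots,q\}$ pick $y_i\in A$ with $d(y_i,x_i)<\eta$, taking in particular $y_p=a$ and $y_q=b$. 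Then for $p\leq i\leq q-1$,
\[ d(f(y_i),y_{i+1})\leq d(f(y_i),f(x_i))+d(f(x_i),x_{i+1})+d(x_{i+1},y_{i+1})<\tfrac{\delta}{4}+\tfrac{\delta}{4}+\eta<\delta, \]
so $\langle y_i\rangle_{i=p}^{q}$ is a $\delta$‑chain in $A$ from $a$ to $b$ of length $q-p\geq 1$. As $a,b\in A$ and $\delta>0$ were arbitrary, $A$ is internally chain transitive.

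Finally, the backward case is symmetric: for a backward pre‑compact asymptotic pseudo‑orbit $\langle x_i\rangle_{i\leq 0}$ with $A=\alpha(\langle x_i\rangle)$, the absorption fact reads $x_n\in B_\eta(A)$ for all $n\leq -M$, and — because chains still run forward in time — given $a,b\in A$ one picks $q\leq -M$ with $d(x_q,b)<\eta$ and then $p<q$ with $d(x_p,a)<\eta$; since $p<q\leq -M$ every index in $\{p,\dots,q-1\}$ is at most $-M$, so the same three‑term estimate yields a $\delta$‑chain $\langle y_i\rangle_{i=p}^{q}$ in $A$ from $a$ to $b$. I expect the only mildly delicate points to be the absorption fact above and the constant bookkeeping needed to guarantee that the \emph{projected} segment $\langle y_i\rangle$ is an honest $\delta$‑chain lying \emph{inside} $A$, rather than merely one that shadows the pseudo‑orbit near $A$; everything else is routine, and the statement for compact $X$ then follows immediately since $A$ is a nonempty closed (indeed compact) subset of $X$.
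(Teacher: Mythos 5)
Your proof is correct. The paper itself gives no proof of this lemma (it is quoted from Hirsch \emph{et al.}), and your argument --- the absorption fact via pre-compactness followed by projecting a tail segment of the pseudo-orbit onto $A$ and controlling $d(f(y_i),y_{i+1})$ by uniform continuity plus the asymptotic error bound --- is essentially the standard proof from the cited source, with the backward case handled correctly by keeping the chain running forward in time.
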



\begin{theorem}\label{thm_char_full_orbit_P_e}
A dynamical system $(X,f)$ exhibits property $P_e$ if and only if it has $\gamma$-restricted two-sided orbital limit shadowing.
\end{theorem}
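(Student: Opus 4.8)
The plan is to prove the two implications separately; both are short and rest on a single pairing of ideas. On one hand, the hypothesis of $\gamma$-restricted two-sided orbital limit shadowing only concerns two-sided asymptotic pseudo-orbits $\langle x_i \rangle_{i \in \mathbb{Z}}$ whose $\alpha$- and $\omega$-limit sets already coincide, and for such a pseudo-orbit Lemma \ref{LemmaOmegaAlphaAsymPseudoAreICT} tells us that the common value $A := \alpha(\langle x_i \rangle) = \omega(\langle x_i \rangle)$ lies in $ICT_f$ (it is nonempty and closed by compactness of $X$, and internally chain transitive by the lemma applied to $\langle x_i \rangle_{i \le 0}$ and to $\langle x_i \rangle_{i \ge 0}$). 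On the other hand, Lemma \ref{Lemma_KEYLEMMA_ICT} says that every $A \in ICT_f$ is realised as $\alpha(\langle a_i \rangle) = \omega(\langle a_i \rangle) = A$ for some two-sided asymptotic pseudo-orbit $\langle a_i \rangle_{i \in \mathbb{Z}}$ (taking, say, $\epsilon = 1$ in that lemma). These two facts make $\gamma$-restricted two-sided orbital limit shadowing an exact reformulation of $P_e$.

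For the direction ($\Leftarrow$), assume $(X,f)$ has $\gamma$-restricted two-sided orbital limit shadowing and let $A \in ICT_f$. Apply Lemma \ref{Lemma_KEYLEMMA_ICT} to obtain a two-sided asymptotic pseudo-orbit $\langle a_i \rangle_{i \in \mathbb{Z}}$ with $\alpha(\langle a_i \rangle) = \omega(\langle a_i \rangle) = A$. Since this pseudo-orbit satisfies $\alpha(\langle a_i \rangle) = \omega(\langle a_i \rangle)$, the shadowing hypothesis yields a full trajectory $\langle z_i \rangle_{i \in \mathbb{Z}}$ with $\alpha(\langle z_i \rangle) = \alpha(\langle a_i \rangle) = A$ and $\omega(\langle z_i \rangle) = \omega(\langle a_i \rangle) = A$. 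This is precisely the full trajectory required by $P_e$.

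For the direction ($\Rightarrow$), assume $(X,f)$ has property $P_e$ and let $\langle x_i \rangle_{i \in \mathbb{Z}}$ be any two-sided asymptotic pseudo-orbit with $\alpha(\langle x_i \rangle) = \omega(\langle x_i \rangle) =: A$. As noted above, Lemma \ref{LemmaOmegaAlphaAsymPseudoAreICT} together with compactness of $X$ gives $A \in ICT_f$. By $P_e$ there is a full trajectory $\langle z_i \rangle_{i \in \mathbb{Z}}$ with $\alpha(\langle z_i \rangle) = \omega(\langle z_i \rangle) = A$, and this trajectory satisfies $\alpha(\langle z_i \rangle) = \alpha(\langle x_i \rangle)$ and $\omega(\langle z_i \rangle) = \omega(\langle x_i \rangle)$, verifying the definition of $\gamma$-restricted two-sided orbital limit shadowing.

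The argument involves no serious obstacle: the content was already invested in isolating the correct variant of shadowing, and the proof is essentially a matching exercise between the definitions and the two cited lemmas. The only points requiring minor care are confirming that the relevant $\alpha$- and $\omega$-limit sets are genuine members of $ICT_f$ — nonemptiness and closedness from compactness, internal chain transitivity from Lemma \ref{LemmaOmegaAlphaAsymPseudoAreICT} — and keeping the roles of the pseudo-orbit and the shadowing full trajectory straight. In contrast to the analysis of $P_a$, no $\epsilon$-approximation bookkeeping is needed here.
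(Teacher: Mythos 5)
Your proof is correct and follows essentially the same route as the paper: the forward implication uses Lemma \ref{LemmaOmegaAlphaAsymPseudoAreICT} to place $A=\alpha(\langle x_i\rangle)=\omega(\langle x_i\rangle)$ in $ICT_f$ and then invokes $P_e$, while the reverse implication uses Lemma \ref{Lemma_KEYLEMMA_ICT} to produce the required two-sided asymptotic pseudo-orbit and then applies the shadowing hypothesis. No substantive differences.
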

\begin{proof}
First suppose that $(X,f)$ has property $P_e$. Let $\langle x_i \rangle_{i \in \mathbb{Z}}$ be a two-sided asymptotic pseudo-orbit such that $\alpha( \langle x_i \rangle )=\omega (\langle x_i \rangle )$. Let $A = \alpha( \langle x_i \rangle )=\omega (\langle x_i \rangle )$. By Lemma \ref{LemmaOmegaAlphaAsymPseudoAreICT}, $A \in ICT_f$. Therefore, since $(X,f)$ exhibits property $P_e$, there exists a full trajectory $\langle z_i \rangle _{i \in \mathbb{Z}}$ such that $\alpha( \langle z_i \rangle )=\omega (\langle z_i \rangle )=A$. I.e.\ $\alpha (\langle z_i \rangle)=\alpha (\langle x_i \rangle )$ and $\omega (\langle z_i \rangle)=\omega (\langle x_i \rangle)$. Therefore $(X,f)$ has $\gamma$-restricted two-sided orbital limit shadowing.

Now suppose that $(X,f)$ has $\gamma$-restricted two-sided orbital limit shadowing. Let $A \in ICT_f$ be given. By Lemma \ref{Lemma_KEYLEMMA_ICT}, there exists a two-sided asymptotic pseudo-orbit $\langle a_i \rangle_{i \in \mathbb{Z}}$ in $A$ such that $\alpha(\langle a_i \rangle) =A = \omega(\langle a_i \rangle)$. Let $\langle z_i \rangle_{i \in \mathbb{Z}}$ be a full trajectory which $\gamma$-restricted two-sided orbital limit shadows $\langle a_i \rangle_{i \in \mathbb{Z}}$. Then $\alpha (\langle z_i \rangle)=\alpha (\langle a_i \rangle )=A$ and $\omega (\langle z_i \rangle)=\omega (\langle a_i \rangle)=A$. In particular, $\alpha( \langle z_i \rangle )=\omega (\langle z_i \rangle )=A$. Therefore $(X,f)$ has property $P_e$.
\end{proof}

\begin{corollary}\label{corollary_sufficient_P_e}
If a system exhibits any of the following shadowing properties then it has property $P_e$:
\begin{enumerate}
    \item two-sided limit shadowing;
    \item two-sided orbital limit shadowing;
    \item $\delta$-restricted two-sided orbital limit shadowing.
\end{enumerate}
\end{corollary}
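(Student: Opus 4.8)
The plan is to derive Corollary~\ref{corollary_sufficient_P_e} from Theorem~\ref{thm_char_full_orbit_P_e} by showing that each of the three listed properties implies $\gamma$-restricted two-sided orbital limit shadowing. The implications $(1) \Rightarrow (2) \Rightarrow (3) \Rightarrow \gamma\text{-restricted two-sided orbital limit shadowing}$ reduce the work to three short arguments, and in fact only the last step requires any genuine care; the first two are essentially definitional weakenings.

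\begin{proof}
By Theorem~\ref{thm_char_full_orbit_P_e}, it suffices to show that each listed property implies $\gamma$-restricted two-sided orbital limit shadowing.

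\emph{Two-sided limit shadowing implies two-sided orbital limit shadowing.} Let $\langle x_i \rangle_{i \in \mathbb{Z}}$ be a two-sided asymptotic pseudo-orbit and let $\langle z_i \rangle_{i \in \mathbb{Z}}$ be a full trajectory with $d(z_i, x_i) \to 0$ as $i \to \pm\infty$. Since the two sequences are asymptotic in both directions, they have the same accumulation points at $+\infty$ and the same accumulation points at $-\infty$; that is, $\omega(\langle z_i \rangle) = \omega(\langle x_i \rangle)$ and $\alpha(\langle z_i \rangle) = \alpha(\langle x_i \rangle)$.

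\emph{Two-sided orbital limit shadowing implies $\delta$-restricted two-sided orbital limit shadowing.} This is immediate: if the orbital conclusion holds for \emph{every} two-sided asymptotic pseudo-orbit, then in particular it holds for every two-sided asymptotic $\delta$-pseudo-orbit, for any choice of $\delta > 0$ (e.g.\ $\delta = 1$).

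\emph{$\delta$-restricted two-sided orbital limit shadowing implies $\gamma$-restricted two-sided orbital limit shadowing.} Fix $\delta > 0$ witnessing the former property. Let $\langle x_i \rangle_{i \in \mathbb{Z}}$ be a two-sided asymptotic pseudo-orbit with $\alpha(\langle x_i \rangle) = \omega(\langle x_i \rangle)$; write $A$ for this common set, which lies in $ICT_f$ by Lemma~\ref{LemmaOmegaAlphaAsymPseudoAreICT}. Since $A \in ICT_f$, Lemma~\ref{Lemma_KEYLEMMA_ICT} provides a two-sided asymptotic $\delta$-pseudo-orbit $\langle a_i \rangle_{i \in \mathbb{Z}}$ \emph{in $A$} with $\alpha(\langle a_i \rangle) = \omega(\langle a_i \rangle) = A$. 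Applying $\delta$-restricted two-sided orbital limit shadowing to $\langle a_i \rangle_{i \in \mathbb{Z}}$ yields a full trajectory $\langle z_i \rangle_{i \in \mathbb{Z}}$ with $\alpha(\langle z_i \rangle) = \alpha(\langle a_i \rangle) = A$ and $\omega(\langle z_i \rangle) = \omega(\langle a_i \rangle) = A$. Hence $\alpha(\langle z_i \rangle) = \alpha(\langle x_i \rangle)$ and $\omega(\langle z_i \rangle) = \omega(\langle x_i \rangle)$, as required.
\end{proof}

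The only subtle point is the final implication, where one cannot shadow the given pseudo-orbit directly (it may be too ``coarse''), but must instead replace it by the canonical $\delta$-pseudo-orbit inside $A$ furnished by Lemma~\ref{Lemma_KEYLEMMA_ICT}; this is precisely the maneuver that exploits the hypothesis $\alpha(\langle x_i \rangle) = \omega(\langle x_i \rangle)$ and keeps the construction inside a single element of $ICT_f$. Everything else is a routine unwinding of definitions.
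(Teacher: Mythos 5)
Your proof is correct and follows the same route as the paper, which simply asserts that each of the three properties implies $\gamma$-restricted two-sided orbital limit shadowing and then invokes Theorem~\ref{thm_char_full_orbit_P_e}. Your chain $(1)\Rightarrow(2)\Rightarrow(3)\Rightarrow\gamma$-restricted is a clean way to organise this, and you correctly identify the one non-trivial step — that a general asymptotic pseudo-orbit need not be a $\delta$-pseudo-orbit, so one must pass to the pseudo-orbit inside $A$ supplied by Lemma~\ref{Lemma_KEYLEMMA_ICT}.
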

\begin{proof}
It suffices to note that each property implies $\gamma$-restricted two-sided orbital limit shadowing.
\end{proof}

\begin{corollary}
If $(X,f)$ is an expansive system with shadowing then it has $\gamma$-restricted two-sided orbital limit shadowing.
\end{corollary}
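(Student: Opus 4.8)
The plan is to obtain this corollary by simply chaining together two results already established. By Theorem~\ref{thm_shad_plus_expansive_implies_P_e}, a system that is expansive and has shadowing satisfies property $P_e$. By Theorem~\ref{thm_char_full_orbit_P_e}, property $P_e$ is equivalent to $\gamma$-restricted two-sided orbital limit shadowing. Composing the two implications, expansivity together with shadowing yields $\gamma$-restricted two-sided orbital limit shadowing, which is exactly the assertion. So the proof is a one-line deduction; there is no genuine obstacle here, precisely because all of the content has already been isolated in Theorem~\ref{thm_char_full_orbit_P_e} and in \cite[Theorem~4.10]{GoodMeddaughMitchell}.

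For a reader who prefers an argument that does not route through property $P_e$, one can argue directly, along the lines of the proof of Theorem~\ref{thm_shad_implies_P_a}. Let $\langle x_i \rangle_{i \in \mathbb{Z}}$ be a two-sided asymptotic pseudo-orbit with $\alpha(\langle x_i \rangle) = \omega(\langle x_i \rangle) = A$; then $A \in ICT_f$ by Lemma~\ref{LemmaOmegaAlphaAsymPseudoAreICT}. Using shadowing, one tracks longer and longer central blocks of $\langle x_i \rangle$ by genuine orbit segments and, via compactness of $2^X$ and a diagonal extraction, produces a single full trajectory $\langle z_i \rangle_{i \in \mathbb{Z}}$ whose forward and backward tails asymptotically track those of $\langle x_i \rangle$. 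Since asymptotic tracking of a sequence preserves both its $\omega$-limit set and its $\alpha$-limit set, this gives $\omega(\langle z_i \rangle) = \omega(\langle x_i \rangle) = A$ and $\alpha(\langle z_i \rangle) = \alpha(\langle x_i \rangle) = A$.

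The step in that direct route that actually uses expansivity is the passage to the limiting full trajectory $\langle z_i \rangle$ with simultaneous asymptotic tracking on both tails: without expansivity the approximating trajectories need not converge to anything that tracks $\langle x_i \rangle$ asymptotically, and one obtains only the weaker conclusion that the limit sets of $\langle z_i \rangle$ are Hausdorff-close to $A$ (which is the content of Theorem~\ref{thm_shad_implies_P_a}). This is exactly the place where \cite[Theorem~4.10]{GoodMeddaughMitchell} does its work, via the interaction of expansive shadowing with the relevant limit-shadowing properties. Since that machinery is already available, I would present the corollary in its short form: combine Theorem~\ref{thm_shad_plus_expansive_implies_P_e} with Theorem~\ref{thm_char_full_orbit_P_e}.
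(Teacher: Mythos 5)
Your proposal is correct and is exactly the paper's proof: combine Theorem~\ref{thm_shad_plus_expansive_implies_P_e} (expansive plus shadowing implies $P_e$) with Theorem~\ref{thm_char_full_orbit_P_e} ($P_e$ is equivalent to $\gamma$-restricted two-sided orbital limit shadowing). The additional direct sketch is a reasonable aside but is not needed, since the paper likewise routes through $P_e$.
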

\begin{proof}
Shadowing and expansivity together give that $P_e$ is satisfied (Theorem \ref{thm_shad_plus_expansive_implies_P_e}). The result now follows from Theorem \ref{thm_char_full_orbit_P_e}.
\end{proof}

\subsection{$\gamma$-limit sets and $ICT_f$}\label{SectionGamma_Implications}
Before turning our attention to $P_a$, and in keeping with the precedent set in \cite{GoodMeddaughMitchell}, we will briefly highlight a corollary concerning $\gamma$-limit sets. These were introduced by Hero \cite{Hero} who studied them for interval maps, $\gamma$-limit sets have since been further examined by Sun \emph{et al.}\ in \cite{Sun} and \cite{Sun2} for graph maps and dendrites respectively. The $\gamma$-\emph{limit set} of a point $x$, denoted $\gamma(x)$, is defined by saying that, for any $y \in X$, $y \in \gamma (x)$ if and only if $y \in \omega(x)$ and there exists a sequence $\langle y_i\rangle_{i=1} ^\infty$ in $X$ and a strictly increasing sequence $\langle n_i\rangle_{i=1} ^\infty$ in $\mathbb{N}$ such that $f^{n_i}(y_i)=x$ for each $i$ and $\lim_{i \to \infty} y_i = y$. We denote by $\gamma_f$ the set of all $\gamma$-limit sets of $(X,f)$.

We gave the following remark in \cite{GoodMeddaughMitchell}.

\begin{remark}\label{RemarkGammaLimitSetsForHomeomorphisms}
For a dynamical system $(X,f)$ with $f$ a homeomorphism, for any $x \in X$, $\gamma(x)= \alpha(x) \cap \omega(x)$. (Recall that as $f$ is a homeomorphism we have defined $\alpha(x) =\alpha(\langle x_i \rangle _{i \leq 0})$, where $\langle x_i \rangle _{i \leq 0}$ is the unique backward trajectory of $x$.)
\end{remark}

Note that in contrast to $\omega$-limit sets in a compact setting, it is possible that a $\gamma$-limit set may be empty. Consider, for example, $f \colon [0,1] \to [0,1] \colon x \mapsto x^2$. Then, using the content of Remark \ref{RemarkGammaLimitSetsForHomeomorphisms}, we see that $\gamma(\frac{1}{2})= \emptyset$.

\begin{corollary}
If $(X,f)$ is a system with $\gamma$-restricted two-sided orbital limit shadowing then $ICT_f \subseteq \gamma_f$.
\end{corollary}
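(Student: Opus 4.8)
The plan is to exhibit, for each $A \in ICT_f$, a point $x \in X$ whose $\gamma$-limit set equals $A$. Recall that $y \in \gamma(x)$ iff $y \in \omega(x)$ and there is a sequence $\langle y_i \rangle$ with $f^{n_i}(y_i) = x$ for some strictly increasing $\langle n_i \rangle$ and $y_i \to y$. So the natural strategy is: use $\gamma$-restricted two-sided orbital limit shadowing to produce a \emph{full} trajectory $\langle z_i \rangle_{i \in \mathbb{Z}}$ with $\alpha(\langle z_i \rangle) = \omega(\langle z_i \rangle) = A$, and then argue that $\gamma(z_0) = A$, which places $A \in \gamma_f$.

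First I would apply Lemma~\ref{Lemma_KEYLEMMA_ICT} to the given $A \in ICT_f$ to get a two-sided asymptotic pseudo-orbit $\langle a_i \rangle_{i \in \mathbb{Z}}$ in $A$ with $\alpha(\langle a_i \rangle) = \omega(\langle a_i \rangle) = A$. Then, since $(X,f)$ has $\gamma$-restricted two-sided orbital limit shadowing, there is a full trajectory $\langle z_i \rangle_{i \in \mathbb{Z}}$ with $\alpha(\langle z_i \rangle) = A$ and $\omega(\langle z_i \rangle) = A$. (This much is exactly the second half of the proof of Theorem~\ref{thm_char_full_orbit_P_e}, so really the corollary reduces to showing $P_e \Rightarrow ICT_f \subseteq \gamma_f$; I would phrase it via that intermediate step.) Set $x = z_0$. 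Then $\omega(x) = \omega(\langle z_i \rangle) = A$, so it remains to check that every $y \in A$ lies in $\gamma(x)$.

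The key step is the construction of the witnessing sequence. Fix $y \in A = \alpha(\langle z_i \rangle)$. Since $y$ is an accumulation point of $\langle z_i \rangle_{i \leq 0}$, there is a strictly decreasing sequence of indices $\langle -n_i \rangle$ (with $n_i \to \infty$, $n_i \in \mathbb{N}$) such that $z_{-n_i} \to y$. Put $y_i = z_{-n_i}$. Because $\langle z_i \rangle$ is a genuine full trajectory, $f^{n_i}(y_i) = f^{n_i}(z_{-n_i}) = z_0 = x$ for each $i$, and $y_i \to y$. Together with $y \in \omega(x)$ (which holds since $A = \omega(x)$), this shows $y \in \gamma(x)$. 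Hence $A \subseteq \gamma(x)$; and $\gamma(x) \subseteq \omega(x) = A$ always, so $\gamma(x) = A$ and $A \in \gamma_f$.

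I do not anticipate a serious obstacle here: the only mild subtlety is making sure the preimage witnesses $y_i$ come from the backward half of the trajectory so that the exponents $n_i$ are positive and increasing, and that $A = \omega(x)$ is used to supply the ``$y \in \omega(x)$'' clause in the definition of $\gamma(x)$. Everything else is a direct unwinding of definitions once the full trajectory from $\gamma$-restricted two-sided orbital limit shadowing is in hand.
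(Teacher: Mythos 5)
Your proposal is correct and follows essentially the same route as the paper: obtain from $\gamma$-restricted two-sided orbital limit shadowing (via property $P_e$, i.e.\ Theorem \ref{thm_char_full_orbit_P_e}) a full trajectory with $\alpha=\omega=A$, set $x=z_0$, and use $\omega(x)=A$ for the inclusion $\gamma(x)\subseteq A$ and the backward half of the trajectory as the preimage witnesses for $A\subseteq\gamma(x)$. You merely spell out the witnessing sequence $y_i=z_{-n_i}$ more explicitly than the paper does.
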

\begin{proof}
Let $A \in ICT_f$. By Theorem \ref{thm_char_full_orbit_P_e}, $(X,f)$ exhibits $P_e$: let $\langle x_i \rangle_{i \in \mathbb{Z}}$ be a full trajectory such that $\alpha(\langle x_i \rangle)=\omega(\langle x_i \rangle)=A$. Let $x = x_0$. Then $\omega(x)=A$. Since $\gamma(x) \subseteq \omega(x)$ by definition it follows that $\gamma(x) \subseteq A$. Furthermore, since $\langle x_i \rangle_{i \leq 0}$ is a backward trajectory from $x$ and $\alpha(\langle x_i \rangle)=A \subseteq \omega(x)$, it follows that $\gamma(x) \supseteq A$. Thus $\gamma(x)=A$. Since $A \in ICT_f$ was picked arbitrarily it follows that $ICT_f \subseteq \gamma_f$.
\end{proof}

\subsection{Property $P_a$} We now turn our attention to Question \ref{Q2} and property $P_a$, i.e.\ `for any $A \in ICT_f$ and any $\epsilon>0$ there exists a full trajectory $\langle x_i \rangle_{i \in \mathbb{Z}}$ such that $d_H(\alpha(\langle x_i \rangle, A) < \epsilon$ and $d_H( \omega(\langle x_i \rangle),A)< \epsilon$'.

Whilst for $P_a$ to hold it must be the case that $\overline{\alpha_f}=\overline{\omega_f}=ICT_f$, this alone is not sufficient. Example \ref{Example_SquareSystem} serves to demonstrate this.

In \cite{GoodMeddaugh2016} it is shown that the property of $\overline{\omega_f}=ICT_f$ is characterised by a variation on shadowing the authors term \emph{cofinal orbital shadowing}. A system $f \colon X \to X$ has the cofinal orbital shadowing property if for all $\epsilon>0$ there exists $\delta>0$ such that for any $\delta$-pseudo-orbit $\langle x_i \rangle _{i = 0} ^\infty$ there exists a point $ z \in X$ such that for any $K \in \mathbb{N}$ there exists $N \geq K$ such that 
\[d_H(\overline{\{f^{N+i}(z)\}_{i =0} ^\infty} , \overline{\{x_{N+i}\}_{i = 0} ^\infty})<\epsilon.\] 
In \cite{GoodMeddaughMitchell} we show that the natural backward analogue of this property, which we name \emph{backward cofinal orbital shadowing}, characterises when $\overline{\alpha_f}=ICT_f$. These notions motivate the following.

\begin{definition}
A system $(X,f)$ has two-sided cofinal orbital shadowing if for all $\epsilon>0$ there exists $\delta>0$ such that for any two-sided $\delta$-pseudo-orbit $\langle x_i \rangle _{i \in \mathbb{Z}}$ there exists a full trajectory $z \in X$ such that for any $K \in \mathbb{N}$ there exists $N \geq K$ such that 
\begin{enumerate}
    \item $d_H(\overline{\{z_{N+i}\}_{i\geq 0}}, \overline{\{x_{N+i}\}_{i \geq 0}})<\epsilon$;
    \item $d_H(\overline{\{z_{i-N}\}_{i \leq 0}} , \overline{\{x_{i-N}\}_{i \leq 0} })<\epsilon.$
\end{enumerate}
\end{definition}

We will see that this is indeed sufficient for $P_a$, however it is not necessary.

\begin{example}\label{Example_Torus_system}
Let $X= \mathbb{R}/\mathbb{Z} \times \mathbb{R}/\mathbb{Z}$ and let $f$ be given by $(x,y) \mapsto (x+\alpha, y)$, where $\alpha$ is some fixed irrational number. Then $(X,f)$ satisfies $P_a$ but does not have two-sided cofinal orbital shadowing.
\end{example}
Whilst we omit the construction of such a pseudo-orbit, it can be seen that in Example \ref{Example_Torus_system}, given any $\delta>0$, one can make a two-sided $\delta$-pseudo-orbit whose $\alpha$-limit set is $\mathbb{R}/\mathbb{Z} \times \{0\}$ and whose $\omega$-limit set is $\mathbb{R}/\mathbb{Z} \times \{\frac{1}{2}\}$. There is no full trajectory which satisfies the conditions in two-sided cofinal orbital shadowing for such a pseudo-orbit with $\epsilon< \frac{1}{4}$.

As in our search to characterise $P_e$, a restriction is necessary.

\begin{definition}
A system $(X,f)$ has $\gamma$-restricted two-sided cofinal orbital shadowing if for all $\epsilon>0$ there exists $\delta>0$ such that for any two-sided $\delta$-pseudo-orbit $\langle x_i \rangle _{i \in \mathbb{Z}}$ such that $d_H( \alpha(\langle x_i \rangle), \omega(\langle x_i \rangle))<\epsilon$ there exists a full trajectory $z \in X$ such that for any $K \in \mathbb{N}$ there exists $N \geq K$ such that 
\begin{enumerate}
    \item $d_H(\overline{\{z_{N+i}\}_{i \geq 0}}, \overline{\{x_{N+i}\}_{i \geq 0} })<\epsilon$;
    \item $d_H(\overline{\{z_{i-N}\}_{i \leq 0}} , \overline{\{x_{i-N}\}_{i \leq 0} })<\epsilon.$
\end{enumerate}
\end{definition}

\begin{remark}\label{remark_Equiv_twosided_cofinal}
It is equivalent to replace conditions (1) and (2) in the definition of $\gamma$-restricted two-sided cofinal orbital shadowing with the following:
\begin{enumerate}
    \item $d_H(\omega(\langle z_i \rangle), \omega(\langle x_i \rangle ))<\epsilon$;
    \item $d_H(\alpha(\langle z_i \rangle), \alpha(\langle x_i \rangle ))<\epsilon.$
\end{enumerate}
\end{remark}

\begin{theorem}\label{thm_char_approx_with_full_orbit_P_a}
A dynamical system $(X,f)$ exhibits property $P_a$ if and only if it has $\gamma$-restricted two-sided cofinal orbital shadowing.
\end{theorem}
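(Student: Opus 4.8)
The plan is to adapt the proof of Theorem~\ref{thm_char_full_orbit_P_e}, replacing the exact equalities of limit sets that appeared there with $\epsilon$-approximations, and using Remark~\ref{remark_Equiv_twosided_cofinal} throughout so as to work with the conditions $d_H(\omega(\langle z_i\rangle),\omega(\langle x_i\rangle))<\epsilon$ and $d_H(\alpha(\langle z_i\rangle),\alpha(\langle x_i\rangle))<\epsilon$ rather than with the cofinal conditions (1) and (2) directly.

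For the implication ``$\gamma$-restricted two-sided cofinal orbital shadowing $\Rightarrow P_a$'', fix $A\in ICT_f$ and $\epsilon>0$, and let $\delta>0$ witness the shadowing property for $\epsilon/2$. By Lemma~\ref{Lemma_KEYLEMMA_ICT} applied with accuracy $\delta$ there is a two-sided asymptotic $\delta$-pseudo-orbit $\langle a_i\rangle_{i\in\mathbb Z}$ in $A$ with $\alpha(\langle a_i\rangle)=\omega(\langle a_i\rangle)=A$; this is in particular a two-sided $\delta$-pseudo-orbit with $d_H(\alpha(\langle a_i\rangle),\omega(\langle a_i\rangle))=0<\epsilon$, so the shadowing property yields a full trajectory $\langle z_i\rangle_{i\in\mathbb Z}$ which, by Remark~\ref{remark_Equiv_twosided_cofinal}, satisfies $d_H(\omega(\langle z_i\rangle),A)<\epsilon/2<\epsilon$ and $d_H(\alpha(\langle z_i\rangle),A)<\epsilon/2<\epsilon$. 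This is precisely what $P_a$ demands.

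For the converse, fix $\epsilon>0$; I claim it suffices to establish the following assertion: there is $\delta>0$ such that for every two-sided $\delta$-pseudo-orbit $\langle x_i\rangle_{i\in\mathbb Z}$ with $d_H(\alpha(\langle x_i\rangle),\omega(\langle x_i\rangle))<\epsilon$ one can find $A\in ICT_f$ with $d_H(A,\alpha(\langle x_i\rangle))<\epsilon$ and $d_H(A,\omega(\langle x_i\rangle))<\epsilon$. Indeed, writing $V=\alpha(\langle x_i\rangle)$, $W=\omega(\langle x_i\rangle)$ and setting $\epsilon'=\tfrac12\min\{\epsilon-d_H(A,V),\,\epsilon-d_H(A,W)\}>0$, one applies $P_a$ to $A$ with accuracy $\epsilon'$ to obtain a full trajectory $\langle z_i\rangle$ with $d_H(\alpha(\langle z_i\rangle),A)<\epsilon'$ and $d_H(\omega(\langle z_i\rangle),A)<\epsilon'$; the triangle inequality then gives $d_H(\alpha(\langle z_i\rangle),V)<\epsilon$ and $d_H(\omega(\langle z_i\rangle),W)<\epsilon$, and Remark~\ref{remark_Equiv_twosided_cofinal} converts this back into conditions (1) and (2). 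Hence $(X,f)$ has $\gamma$-restricted two-sided cofinal orbital shadowing.

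The crux, and what I expect to be the main obstacle, is this assertion about $\delta$-pseudo-orbits. Its proof should rest on two ingredients. The first is the (routine, compactness-based) fact that the $\alpha$-limit set and the $\omega$-limit set of a two-sided $\delta$-pseudo-orbit each lie within Hausdorff distance $\eta(\delta)$ of $ICT_f$, with $\eta(\delta)\to 0$ as $\delta\to 0$ --- this uses the closedness of $ICT_f$ (Lemma~\ref{lemmaICTclosed}) together with the observation that such a limit set is ``weakly internally chain transitive at scale $\delta$'' (compare Lemma~\ref{LemmaOmegaAlphaAsymPseudoAreICT}). Granting this, when $d_H(V,W)<\epsilon-\eta(\delta)$ one may simply take $A$ to be an element of $ICT_f$ within $\eta(\delta)$ of $W$. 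The delicate situation is the narrow borderline band $d_H(V,W)\in[\epsilon-\eta(\delta),\epsilon)$, where the crude estimate through an element of $ICT_f$ near a single one of the two limit sets is too lossy; here one must exploit that $V$ and $W$ are the two limit sets of one and the same $\delta$-pseudo-orbit --- so that, for instance, a $\delta$-chain runs from every point of $V$ to every point of $W$ --- and pass to a Hausdorff limit along a contradicting sequence $\delta\to 0$, which drives both limit sets into $ICT_f$. Managing this case uniformly in the pseudo-orbit is where I expect the real work to be; the remaining ingredients (the approximation of $ICT_f$ by limit sets of $\delta$-pseudo-orbits and the equivalence in Remark~\ref{remark_Equiv_twosided_cofinal}) I would treat as already available.
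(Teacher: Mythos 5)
Your ``sufficiency'' direction ($\gamma$-restricted two-sided cofinal orbital shadowing $\Rightarrow P_a$) is correct and is essentially the paper's argument: apply Lemma~\ref{Lemma_KEYLEMMA_ICT} to get a two-sided asymptotic $\delta$-pseudo-orbit in $A$ with both limit sets equal to $A$, note that the hypothesis $d_H(\alpha,\omega)<\epsilon$ holds trivially, and read off the conclusion via Remark~\ref{remark_Equiv_twosided_cofinal}. The problem is the converse direction, where your argument rests on an ``assertion'' that you explicitly do not prove in its critical case; as written, that is a genuine gap, and it is not clear the assertion can be rescued by the Hausdorff-limit argument you sketch. Concretely: if the assertion failed along a sequence $\delta_n\to 0$ of pseudo-orbits with limit sets $V_n,W_n$ and $d_H(V_n,W_n)<\epsilon$, your compactness argument only yields limits $V_\infty,W_\infty\in ICT_f$ with $d_H(V_\infty,W_\infty)\leq\epsilon$; when equality holds, neither $V_\infty$ nor $W_\infty$ need lie within $\epsilon$ of both $V_n$ and $W_n$ for large $n$, so no contradiction is reached. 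The borderline band $d_H(V,W)\in[\epsilon-\eta(\delta),\epsilon)$ that you flag does not disappear as $\delta\to 0$ --- it merely narrows --- and a pseudo-orbit in that band must still be handled for the fixed $\delta$ you are required to produce.

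For comparison, the paper's proof of this direction is much shorter and sidesteps your difficulty entirely, at the cost of being less scrupulous at exactly the point where you got stuck. Given $\epsilon$, it sets $\eta=\epsilon/3=\delta$, takes a two-sided $\delta$-pseudo-orbit with $d_H(\alpha(\langle x_i\rangle),\omega(\langle x_i\rangle))<\eta$, asserts via Lemma~\ref{LemmaOmegaAlphaAsymPseudoAreICT} that $A=\alpha(\langle x_i\rangle)$ and $B=\omega(\langle x_i\rangle)$ lie in $ICT_f$, applies $P_a$ to $A$ alone with accuracy $\eta$, and transfers the $\omega$-estimate to $B$ by the triangle inequality ($d_H(\omega(\langle z_i\rangle),B)<2\eta<\epsilon$). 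Note two things. First, Lemma~\ref{LemmaOmegaAlphaAsymPseudoAreICT} is stated for \emph{asymptotic} pseudo-orbits, whereas the definition quantifies over arbitrary two-sided $\delta$-pseudo-orbits, whose limit sets are in general only close to $ICT_f$ (within some $\eta(\delta)\to 0$), not members of it --- so your instinct that this is the delicate point is sound, and your reluctance to assume $V,W\in ICT_f$ is what forces you into the unproven assertion. Second, the paper only treats pseudo-orbits satisfying $d_H(\alpha,\omega)<\epsilon/3$ rather than $<\epsilon$; your attempt to handle the full range $d_H(V,W)<\epsilon$ is precisely what creates the borderline band you cannot close. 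If you adopt the paper's conventions (limit sets of the admissible pseudo-orbits taken to be in $ICT_f$, and the budget $\eta=\epsilon/3$ absorbing the gap between $\alpha$ and $\omega$), your argument collapses to the paper's; without them, your proof is incomplete at the step you yourself identify as ``where the real work'' lies.
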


\begin{proof}
First suppose that $(X,f)$ has property $P_a$. (We will use the content of Remark \ref{remark_Equiv_twosided_cofinal} to show $(X,f)$ has $\gamma$-restricted two-sided cofinal orbital shadowing.) Let $\epsilon>0$ be given. Now take $\eta= \frac{\epsilon}{3}$ and let $\delta= \eta$. Let $\langle x_i \rangle _{i \in \mathbb{Z}}$ be a two-sided $\delta$-pseudo-orbit such that $d_H( \alpha(\langle x_i \rangle), \omega(\langle x_i \rangle))<\eta$. Let $A = \alpha( \langle x_i \rangle )$ and $B=\omega (\langle x_i \rangle )$. By Lemma \ref{LemmaOmegaAlphaAsymPseudoAreICT}, $A, B \in ICT_f$. Therefore, since $(X,f)$ exhibits property $P_a$, there exists a full trajectory $\langle z_i \rangle _{i \in \mathbb{Z}}$ such that $d_H(\alpha( \langle z_i \rangle ), A)< \eta$ and $d_H(\omega( \langle z_i \rangle ), A)< \eta$. It follows by the triangle inequality that $d_H(\omega( \langle z_i \rangle ), B)< 2\eta < \epsilon$. Therefore $(X,f)$ has $\gamma$-restricted two-sided cofinal orbital shadowing.


Now suppose that $(X,f)$ has $\gamma$-restricted two-sided cofinal orbital shadowing. Let $A \in ICT_f$ be given and let $\epsilon>0$ be given. Take $\delta>0$ corresponding to $\epsilon$ for the formulation of $\gamma$-restricted two-sided orbital limit shadowing given by Remark \ref{remark_Equiv_twosided_cofinal} (without loss of generality $\delta < \epsilon$). By Lemma \ref{Lemma_KEYLEMMA_ICT}, there exists a two-sided asymptotic $\delta$-pseudo-orbit $\langle a_i \rangle_{i \in \mathbb{Z}}$ in $A$ such that $\alpha(\langle a_i \rangle) =A = \omega(\langle a_i \rangle)$. Let $\langle z_i \rangle_{i \in \mathbb{Z}}$ be a full trajectory for which
\begin{enumerate}
    \item $d_H(\omega(\langle z_i \rangle), \omega(\langle a_i \rangle ))<\epsilon$;
    \item $d_H(\alpha(\langle z_i \rangle), \alpha(\langle a_i \rangle ))<\epsilon.$
\end{enumerate}
In particular, $d_H(\alpha (\langle z_i \rangle), A) < \epsilon$ and $d_H(\omega (\langle z_i \rangle), A) < \epsilon$. Therefore $(X,f)$ satisfies property $P_a$.
\end{proof}

\begin{corollary}
If $(X,f)$ has shadowing then it has $\gamma$-restricted two-sided cofinal orbital shadowing.
\end{corollary}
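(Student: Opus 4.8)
The plan is to deduce this immediately from the characterisation just established, rather than construct a tracing full trajectory by hand. First I would invoke Theorem \ref{thm_shad_implies_P_a}, which tells us that shadowing implies property $P_a$. Then I would appeal to Theorem \ref{thm_char_approx_with_full_orbit_P_a}, which states that $P_a$ is \emph{equivalent} to $\gamma$-restricted two-sided cofinal orbital shadowing. Chaining these two facts gives the conclusion in one line, exactly as in the proof of the corollary following Theorem \ref{thm_char_full_orbit_P_e}.

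Concretely, the steps are: (1) observe $(X,f)$ has shadowing; (2) apply Theorem \ref{thm_shad_implies_P_a} to obtain property $P_a$; (3) apply the `only if' direction of Theorem \ref{thm_char_approx_with_full_orbit_P_a} to conclude $(X,f)$ has $\gamma$-restricted two-sided cofinal orbital shadowing. There is no genuine obstacle here: all the work has been absorbed into Theorem \ref{thm_char_approx_with_full_orbit_P_a} (whose `only if' direction itself relies on Lemma \ref{LemmaOmegaAlphaAsymPseudoAreICT} together with a triangle-inequality juggling of $\epsilon$) and into Theorem \ref{thm_shad_implies_P_a} (equivalently \cite[Theorem 4.2]{GoodMeddaughMitchell}). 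If one wanted a self-contained argument avoiding the round-trip through $P_a$, the mild technical point to check would be that a $\delta$-pseudo-orbit which is additionally two-sided asymptotic can be $\epsilon$-shadowed by a genuine full trajectory — which follows since shadowing implies two-sided shadowing on the relevant surjective restriction — but this is unnecessary given the stated results.

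\begin{proof}
By Theorem \ref{thm_shad_implies_P_a}, shadowing implies that $(X,f)$ satisfies property $P_a$. The result now follows from Theorem \ref{thm_char_approx_with_full_orbit_P_a}.
\end{proof}
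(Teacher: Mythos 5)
Your proof is correct and is essentially identical to the paper's: it cites Theorem \ref{thm_shad_implies_P_a} to obtain property $P_a$ from shadowing and then applies the characterisation in Theorem \ref{thm_char_approx_with_full_orbit_P_a}. Nothing further is needed.
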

\begin{proof}
By Theorem \ref{thm_shad_implies_P_a} a system with shadowing satisfies property $P_a$. The result follows from Theorem \ref{thm_char_approx_with_full_orbit_P_a}.
\end{proof}

\begin{corollary}
If a system $(X,f)$ has two-sided cofinal orbital shadowing then it has property $P_a$.
\end{corollary}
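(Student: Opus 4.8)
The plan is to deduce this directly from Theorem \ref{thm_char_approx_with_full_orbit_P_a}, which states that $P_a$ is equivalent to $\gamma$-restricted two-sided cofinal orbital shadowing. So the whole task reduces to showing that two-sided cofinal orbital shadowing implies its $\gamma$-restricted counterpart; once that is in hand, one simply invokes the theorem.

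This implication should be essentially immediate, since the $\gamma$-restricted version asks for exactly the same shadowing conclusion but only for the \emph{smaller} class of two-sided $\delta$-pseudo-orbits $\langle x_i \rangle_{i \in \mathbb{Z}}$ that additionally satisfy $d_H(\alpha(\langle x_i \rangle), \omega(\langle x_i \rangle)) < \epsilon$. Concretely, given $\epsilon > 0$, I would take the $\delta > 0$ furnished by two-sided cofinal orbital shadowing for this $\epsilon$; then any two-sided $\delta$-pseudo-orbit whatsoever — in particular any one meeting the extra hypothesis — admits a full trajectory $\langle z_i \rangle_{i \in \mathbb{Z}}$ such that for every $K \in \mathbb{N}$ there is $N \geq K$ with $d_H(\overline{\{z_{N+i}\}_{i \geq 0}}, \overline{\{x_{N+i}\}_{i \geq 0}}) < \epsilon$ and $d_H(\overline{\{z_{i-N}\}_{i \leq 0}}, \overline{\{x_{i-N}\}_{i \leq 0}}) < \epsilon$. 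Since the additional hypothesis on the pseudo-orbit is never used, the same $\delta$ witnesses $\gamma$-restricted two-sided cofinal orbital shadowing for $\epsilon$, and as $\epsilon$ was arbitrary the restricted property holds.

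There is no genuine obstacle here; the only point requiring a moment's care is that the quantifier structures of the two definitions line up, so that imposing an extra constraint on the admissible pseudo-orbits really does yield a formally weaker property — which it does, because the conclusion (the full trajectory, and the "for all $K$ there exists $N \geq K$" cofinality condition) is verbatim the same in both definitions. Thus two-sided cofinal orbital shadowing implies $\gamma$-restricted two-sided cofinal orbital shadowing, and Theorem \ref{thm_char_approx_with_full_orbit_P_a} then gives property $P_a$.
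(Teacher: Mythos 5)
Your argument is correct and is exactly the (implicit) proof the paper intends: two-sided cofinal orbital shadowing trivially implies its $\gamma$-restricted counterpart, since the latter demands the identical conclusion only for the smaller class of pseudo-orbits satisfying the extra Hausdorff-distance hypothesis, and Theorem \ref{thm_char_approx_with_full_orbit_P_a} then yields $P_a$. No issues.
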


\begin{corollary}
If $(X,f)$ has $\gamma$-restricted two-sided orbital limit shadowing then it has $\gamma$-restricted two-sided cofinal orbital shadowing.
\end{corollary}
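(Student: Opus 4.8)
The plan is to route the implication through the two characterisation theorems already established, rather than arguing directly at the level of pseudo-orbits. First I would invoke Theorem \ref{thm_char_full_orbit_P_e}: since $(X,f)$ has $\gamma$-restricted two-sided orbital limit shadowing, it exhibits property $P_e$.

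Next I would observe that property $P_e$ trivially entails property $P_a$. Indeed, given $A \in ICT_f$ and $\epsilon>0$, property $P_e$ furnishes a full trajectory $\langle x_i \rangle_{i \in \mathbb{Z}}$ with $\alpha(\langle x_i \rangle)=\omega(\langle x_i \rangle)=A$; then $d_H(\alpha(\langle x_i \rangle),A)=0<\epsilon$ and $d_H(\omega(\langle x_i \rangle),A)=0<\epsilon$, which is exactly what $P_a$ requires. Hence $(X,f)$ satisfies $P_a$.

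Finally I would apply Theorem \ref{thm_char_approx_with_full_orbit_P_a} in the reverse direction: a system with property $P_a$ has $\gamma$-restricted two-sided cofinal orbital shadowing. Composing the three steps yields the claim. There is no real obstacle here — the content is entirely carried by Theorems \ref{thm_char_full_orbit_P_e} and \ref{thm_char_approx_with_full_orbit_P_a} — so the only point worth spelling out is the elementary observation that ``equal'' is a special case of ``approximate to within any $\epsilon$'', i.e.\ $P_e \Rightarrow P_a$.
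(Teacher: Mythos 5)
Your proposal is correct and is exactly the paper's argument: the paper's proof reads ``Since $P_e \implies P_a$, the result follows from theorems \ref{thm_char_full_orbit_P_e} and \ref{thm_char_approx_with_full_orbit_P_a}.'' You have merely spelled out the (correct and trivial) implication $P_e \Rightarrow P_a$ that the paper takes for granted.
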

\begin{proof}
Since $P_e \implies P_a$, the result follows from theorems \ref{thm_char_full_orbit_P_e} and \ref{thm_char_approx_with_full_orbit_P_a}.
\end{proof}

\section{Closing examples}\label{Section_closing_examples}
We wrap up this paper by constructing two further examples. We start, in Example \ref{Example_periodic_cofinal}, by constructing a homeomorphism which exhibits $P_a$ but not $P_e$, thereby demonstrating that $\gamma$-restricted two-sided cofinal orbital shadowing does not imply $\gamma$-restricted two-sided orbital limit shadowing. We then close the paper by giving one final example (Example \ref{Example_Seq_of_squares}) which draws some of the themes in \cite{GoodMeddaugh2016}, \cite{GoodMeddaughMitchell} and the present paper, together.

\begin{example}\label{Example_periodic_cofinal}
Start with $Q$ as in Example \ref{Example_SquareSystem} and let $f$ act on these points in the same manner. Now, for each $n \in \mathbb{N}$, insert the set $R_n$. However, in contract to Example \ref{Example_SquareSystem}, let $f$ act on $R_n$ in a simple anticlockwise manner; so that each $R_n$ consists of a periodic orbit going anticlockwise. 
Then $(X,f)$ is a homeomorphism which satisfies property $P_a$ but not property $P_e$.
\end{example}
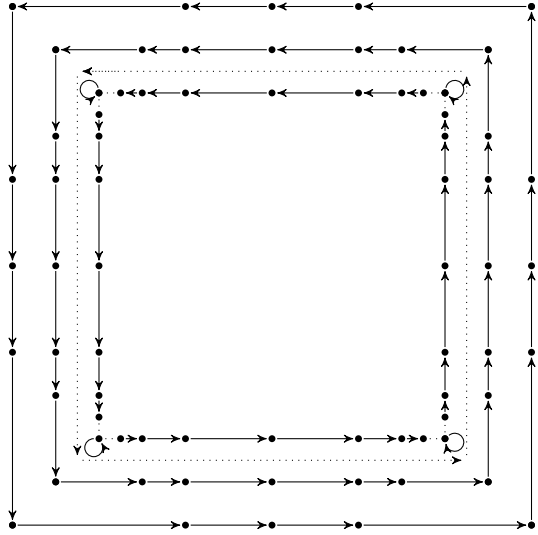
\begin{figure}[h!] \label{FigurePeriodicCofinalExample}
\centering
\begin{tikzpicture}[>=stealth',scale=2.3]

	\fill[black] (-1,-1) circle (.2mm);
	\node[circle,draw=white] (origin) at (-1,-1) {};
	\drawloop[->,stretch=0.4]{origin}{180}{300};

	\fill[black] (1,1) circle (.2mm);
	\node[circle,draw=white] (end) at (1,1) {};
	\drawloop[<-,stretch=0.4]{end}{320}{440};;

	\fill[black] (-1,1) circle (.2mm);
	\node[circle,draw=white] (origin) at (-1,1) {};
	\drawloop[->,stretch=0.4]{origin}{100}{220};
	
	\fill[black] (1,-1) circle (.2mm);
	\node[circle,draw=white] (end) at (1,-1) {};
	\drawloop[<-,stretch=0.4]{end}{280}{400};;
	
	\foreach \y in {0,0.5,-.5,0.75,-.75,0.875,-.875,}
		{
		\fill[black] (-1,\y) circle (.2mm);
		}
	\foreach \y/\x in {0.845/0.78,0.72/.53,.47/.03,-.03/-.47,-.53/-.72,-.78/-.845}
		{
		\draw[->] (-1,\y) -- (-1,\x);
		}
		\draw[dotted, -] (-1,1) -- (-1,0.875);
		\draw[dotted, -] (-1,-.875) -- (-1,-1);

		\foreach \y in {0,0.5,-.5,0.75,-.75,0.875,-.875,}
		{
		\fill[black] (1,\y) circle (.2mm);
		}
		\foreach \y/\x in {0.845/0.78,0.72/.53,.47/.03,-.03/-.47,-.53/-.72,-.78/-.845}
		{
		\draw[->] (1,\x) -- (1,\y);
		}
		\draw[dotted, -] (1,0.875) -- (1,1);
		\draw[dotted, -] (1,-1) -- (1,-.875);

		\foreach \x in {0,0.5,-.5,0.75,-.75,0.875,-.875,}
		{
		\fill[black] (\x,-1) circle (.2mm);
		}
		\foreach \y/\x in {0.845/0.78,0.72/.53,.47/.03,-.03/-.47,-.53/-.72,-.78/-.845}
		{
		\draw[->] (\x,-1) -- (\y,-1);
		}
		\draw[dotted, -] (0.875,-1) -- (1,-1);
		\draw[dotted, -] (-1,-1) -- (-.875,-1);

		\foreach \x in {0,0.5,-.5,0.75,-.75,0.875,-.875,}
		{
		\fill[black] (\x,1) circle (.2mm);
		}
			\foreach \y/\x in {0.845/0.78,0.72/.53,.47/.03,-.03/-.47,-.53/-.72,-.78/-.845}
		{
		\draw[->] (\y,1) -- (\x,1);
		}
		\draw[dotted, -] (1,1) -- (0.875,1);
		\draw[dotted, -] (-.875,1) -- (-1,1);

		\foreach \x in {0.5,-.5,0,1.5,-1.5}
		{
		\fill[black] (\x,1.5) circle (.2mm);
		}
		\draw[->] (-0.03,1.5) -- (-0.47,1.5);
		\draw[->] (0.47,1.5) -- (0.03,1.5);
		\draw[->] (1.47,1.5) -- (0.53,1.5);
		\draw[->] (-0.53,1.5) -- (-1.47,1.5);
		
		\foreach \x in {0.5,-.5,0,1.5,-1.5}
		{
		\fill[black] (\x,-1.5) circle (.2mm);
		}
		\draw[->]  (-1.47,-1.5) -- (-0.53,-1.5);
		\draw[->]  (-.47,-1.5) -- (-0.03,-1.5);	
		\draw[->]  (.03,-1.5) -- (.47,-1.5);
		\draw[->]  (.53,-1.5) -- (1.47,-1.5);
		
		\foreach \y in {0.5,-.5,0,1.5,-1.5}
		{
		\fill[black] (1.5,\y) circle (.2mm);
		}
		\draw[->]  (1.5,-1.47) -- (1.5,-0.53);
		\draw[->]  (1.5,-.47) -- (1.5,-0.03);	
		\draw[->]  (1.5,.03) -- (1.5,.47);
		\draw[->]  (1.5,.53) -- (1.5,1.47);
		
		\foreach \y in {0.5,-.5,0,1.5,-1.5}
		{
		\fill[black] (-1.5,\y) circle (.2mm);
		}
		\draw[->] (-1.5,1.47) -- (-1.5,0.53);
		\draw[->] (-1.5,0.47) -- (-1.5,0.03);
		\draw[->] (-1.5,-0.03) -- (-1.5,-0.47);
		\draw[->] (-1.5,-0.53) -- (-1.5,-1.47);
		
		\foreach \x in {0.5,-.5,0,-.75,.75, 1.25, -1.25}
		{
		\fill[black] (\x,1.25) circle (.2mm);
		}
		\draw[->] (-0.53,1.25) -- (-.72,1.25);
		\draw[->] (1.22,1.25) -- (.78,1.25);
		\draw[->] (.72,1.25) -- (0.53,1.25);
	    \draw[->] (.47,1.25) -- (0.03,1.25);
		\draw[->] (-0.03,1.25) -- (-.47,1.25);
        \draw[->] (-0.78,1.25) -- (-1.22,1.25);

		\foreach \x in {0.5,-.5,0,-.75,.75, 1.25, -1.25}
		{
		\fill[black] (\x,-1.25) circle (.2mm);
		}
			\draw[->] (-1.22,-1.25) -- (-.78,-1.25);
		 \draw[->] (-.72,-1.25) -- (-0.53,-1.25);
		 	\draw[->] (-.47,-1.25) -- (-0.03,-1.25);
		 	\draw[->]  (.03,-1.25) -- (.47,-1.25);
		 	 \draw[->] (0.53,-1.25) -- (.72,-1.25);
		 	 \draw[->] (0.78,-1.25) -- (1.22,-1.25);
		
		\foreach \y in {0.5,-.5,0,-.75,.75, 1.25, -1.25}
		{
		\fill[black] (1.25,\y) circle (.2mm);
		}
		 \draw[->] (1.25,-1.22) -- (1.25,-0.78);
		 \draw[->] (1.25,-.72) -- (1.25,-0.53);
		 	\draw[->] (1.25,-.47) -- (1.25,-0.03);
		 	\draw[->]  (1.25,.03) -- (1.25,.47);
		 	 \draw[->] (1.25,0.53) -- (1.25,.72);
		 	  \draw[->] (1.25,.78) -- (1.25,1.22);
		
		\foreach \y in {0.5,-.5, 0,-.75,.75, 1.25, -1.25}
		{
		\fill[black] (-1.25,\y) circle (.2mm);
		}
		 \draw[->] (-1.25,1.22) -- (-1.25, .78);
			 \draw[->] (-1.25,.72) -- (-1.25, 0.53);
		 	\draw[->] (-1.25,.47) -- (-1.25,0.03);
		 	\draw[->]  (-1.25, -.03) -- (-1.25,-.47);
		 	 \draw[->] (-1.25, -0.53) -- (-1.25,-.72);
		 	 \draw[->] (-1.25,-.78) -- (-1.25, -1.22);

		 		\draw[dotted, ->] (1.095,1.125) -- (-1.095,1.125);
		 		\draw[dotted, ->] (1.125,-1.095) -- (1.125,1.095);
		 			\draw[dotted, ->] (-1.095,-1.125) -- (1.095,-1.125);
		 			\draw[dotted, ->] (-1.125,1.095) -- (-1.125,-1.095);
		 			\draw[dotted, ->] (-0.905,1.125) -- (-1.095,1.125);

\end{tikzpicture}
\caption{The system in Example \ref{Example_periodic_cofinal}}
\end{figure}
It is not difficult to see that $\alpha_f$ and $\omega_f$ are both equal to $\{R_n \mid n \in \mathbb{N}\}\cup \{ \{(1,1)\},\{(-1,1)\},\{(-1,-1)\}, \{(1,-1)\}\}$. Meanwhile $ICT_f$ additionally includes $Q$. Because, for instance, $ICT_f \neq \alpha_f$, it follows that $P_e$ is not satisfied by $(X,f)$. However, for any $\epsilon>0$ there is a full trajectory whose $\alpha$-limit set and $\omega$-limit set both lie within $\epsilon$ of $Q$ (resp.\ $2Q$). To see this observe that the subsystem $(Q, f\restriction_{Q}))$ is the limit of the subsystems $(R_n, f\restriction_{R_n})$. Let $\epsilon>0$ be given and let $n \in \mathbb{N}$ be such that $d_H(R_n,Q)< \epsilon$. Pick $z \in R_n$ and let $\langle z_i \rangle_{i \in \mathbb{Z}}$ be the unique full trajectory with $z_0=z$; this is a periodic orbit with $\alpha(\langle z_i \rangle)=\omega(\langle z_i \rangle)=R_n$. Therefore $d_H(\alpha(\langle z_i \rangle), Q) < \epsilon$ and $d_H(\omega(\langle z_i \rangle), Q) < \epsilon$. Hence $P_a$ holds and, in particular, $\overline{\alpha_f}=\overline{\omega_f}=ICT_f$.

\medskip

As stated at the beginning of this section, our final example (Example \ref{Example_Seq_of_squares}) serves to draw some of the themes in \cite{GoodMeddaugh2016}, \cite{GoodMeddaughMitchell} and in the present paper, together. Example \ref{Example_Seq_of_squares} is an informal construction of a homeomorphism which exhibits neither $P_e$ nor $P_a$ and for which 
\begin{enumerate}
    \item $\alpha_f=\omega_f \neq ICT_f$; and,
    \item $\overline{\alpha_f}=\overline{\omega_f} = ICT_f$.
\end{enumerate}
Furthermore, the only non-singleton elements of $ICT_f$ which may be approximated by the $\alpha$-limit set and $\omega$-limit set of the same full trajectory are precisely the ones which belong to neither $\alpha_f$ nor $\omega_f$. (Hence the system does not have shadowing.)

Before we give this last example, we recall that the system in Example \ref{Example_SquareSystem} has the forward and backward versions of both cofinal orbital shadowing and orbital limit shadowing. This is because $\alpha_f= \omega_f= ICT_f$. The system in Example \ref{Example_Seq_of_squares}, on the other hand, will have the forward and backward versions of cofinal orbital shadowing (since $\overline{\alpha_f}=\overline{\omega_f} = ICT_f$), but neither the forward nor backward version of orbital limit shadowing (since $\alpha_f=\omega_f \neq ICT_f$).

\begin{example}\label{Example_Seq_of_squares}
Start with $Q$ as in Example \ref{Example_SquareSystem} and 
let $f$ act on these points in the same manner. For $n \in \mathbb{R}^+$, define the set $nQ \coloneqq \{(nx,ny) \mid (x,y) \in Q\}$. For each $n \in \mathbb{N}$ insert the sets $\frac{2^{n+1}-1}{2^n}Q$ and $\frac{2^n+1}{2^n}Q$. Also insert the set $2Q$. Let $f$ act on these akin to the way it acts on $Q$. 

Now, for each $n \in \mathbb{N}$, insert a two-sided sequence of points which lies between $\frac{2^{n+1}-1}{2^n}Q$ and $\frac{2^{(n+1)+1}-1}{2^{n+1}}Q$ in the plane such that 
\begin{enumerate}
    \item each point maps onto the next in the sequence;
    \item the $\omega$-limit set of every point in the sequence is $\frac{2^{(n+1)+1}-1}{2^{n+1}}Q$; and,
    \item the $\alpha$-limit set of every point in the sequence is $\frac{2^{n+1}-1}{2^{n}}Q$.
\end{enumerate}
(Combining, and making suitable adjustments to, some of the techniques used in Example \ref{Example_SquareSystem} would be one appropriate way to accomplish this.)

Finally, for each $n \in \mathbb{N}$ with $n \geq 2$, insert a two-sided sequence of points which lies between $\frac{2^n+1}{2^n}Q$ and $\frac{2^{n-1}+1}{2^{n-1}}Q$ in the plane such that 
\begin{enumerate}
    \item each point maps onto the next in the sequence;
    \item the $\omega$-limit set of every point in the sequence is $\frac{2^{n-1}+1}{2^{n-1}}Q$; and,
    \item the $\alpha$-limit set of every point in the sequence is $\frac{2^n+1}{2^n}Q$.
\end{enumerate}
\end{example}
Whilst we omit a proof of the fact, it is not difficult to see that $\alpha_f$ and $\omega_f$ are comprised of $\frac{2^{n+1}-1}{2^n}Q$ and $\frac{2^n+1}{2^n}Q$ (for each $n \in \mathbb{N}$), along with the singleton sets of all fixed points in the system. Meanwhile $ICT_f$ additionally includes $Q$ and $2Q$. This implies that $P_e$ does not hold. Note further that no set of the form $\frac{2^{n+1}-1}{2^n}Q$ may be approximated to any given accuracy by \emph{both} the $\alpha$-limit set and $\omega$-limit set of the same full trajectory. However, for any $\epsilon>0$ there is a full trajectory whose $\alpha$-limit set and $\omega$-limit set both lie within $\epsilon$ of $Q$ (resp.\ $2Q$). To see this, observe that the subsystem $(Q, f\restriction_{Q}))$ is the limit of the sequence of subsystems $(\frac{2^n+1}{2^n}Q, f\restriction_{\frac{2^n+1}{2^n}Q})$ as $n \to \infty$. Let $\epsilon>0$ be given and let $n \in \mathbb{N}$ be such that $d_H(\frac{2^n+1}{2^n}Q,Q)< \epsilon$. Let $\langle z_i \rangle_{i \in \mathbb{Z}}$ be a full trajectory for which $\omega(\langle z_i \rangle) =\frac{2^{n-1}+1}{2^{n-1}}Q$ and $\alpha(\langle z_i \rangle)=\frac{2^n+1}{2^n}Q$. Then $d_H(\alpha(\langle z_i \rangle), Q) < \epsilon$ and $d_H(\omega(\langle z_i \rangle), Q)< \epsilon$. By noting that the subsystem $(2Q, f\restriction_{2Q}))$ is the limit of the sequence of subsystems $(\frac{2^{n+1}-1}{2^n}Q, f\restriction_{\frac{2^{n+1}-1}{2^n}Q})$ as $n \to \infty$, a similar argument may be given with regard to $2Q$. In particular it follows that $\overline{\alpha_f}=\overline{\omega_f}=ICT_f$.




\bibliographystyle{plain} 
\bibliography{bib}

\end{document}